\def \A {\mathcal{A}}
\def \B {\mathcal{B}}
\def \C {\mathcal{C}}
\newcommand{\ol}[1]{\smash{\overline{#1}}\vphantom{#1}}
\def \ra {\rightarrow}
\DeclarePairedDelimiterX{\gset}[2]{\{}{\}}{\,#1:#2\,}
\tikzstyle{background}=[rectangle, fill=gray!10, inner sep=-0.02cm, rounded corners=0.7mm] 
\newcommand \drawArrows[3][m]{
    \foreach \i in {#2} {
      \foreach \j in {#3} {
        \draw[->] let
          \n1 = {int(\j-1)}, 
          \n2 = {int(\j+1)}  
        in
          (#1-\i-\n1.east) -> (#1-\i-\n2.west);
        \draw[->] let
          \n1 = {int(\i-1)}, 
          \n2 = {int(\i+1)}  
        in
          (#1-\n1-\j.south) -> (#1-\n2-\j.north);
      };
    };
}
\tikzset{
  dominopicture/.style={
    baseline=0mm,
    dominoinode/.style={
      font=\small,
      inner sep=0mm,
      outer sep=0mm,
      minimum height=2.5mm,
      minimum width=2.5mm,
      execute at begin node=\strut
    },
  },
  smalldominopicture/.style={
    baseline=-.75mm,
    dominoinode/.style={
      font=\small,
      inner sep=0mm,
      outer sep=0mm,
      minimum height=2mm,
      minimum width=1mm,
    },
  },
}
\newcommand\domino[3][dominopicture]{
  \begin{tikzpicture}[#1]
    \node[anchor=base,dominoinode] (lnode) at (0,0) {$#2$};
    \node[anchor=base west,dominoinode] (rnode) at ($ (lnode.base
east) + (1mm,0) $) {$#3$};
    \coordinate (nw) at ($ (current bounding box.north west) + (-1mm,.15mm) $);
    \coordinate (se) at ($ (current bounding box.south east) + (1mm,-.15mm) $);
    \coordinate (mid) at ($ (lnode.east)!.5!(rnode.west) $);
    \draw[-] ($ (mid |- nw) $)--($ (mid |- se) $);
    \draw[-] (nw) rectangle (se);
  \end{tikzpicture}
}
\newtheorem{theorem}{Theorem}[section]
\newtheorem{proposition}[theorem]{Proposition}
\newtheorem{fact}[theorem]{Fact}
\newtheorem{corollary}[theorem]{Corollary}
\newtheorem{example}[theorem]{Example}
\newtheorem{question}[theorem]{Question}
\newtheorem{conjecture}[theorem]{Conjecture}
\theoremstyle{definition}
\newtheorem{remark}[theorem]{Remark}
\begin{document}

\title{Preserving self-similarity in free products of semigroups}

\author{Tara Macalister Brough}
\address{Centro de Matem\'{a}tica e Aplica\c{c}\~{o}es, Faculdade de Ci\^{e}ncias e Tecnologia \\
Universidade Nova de Lisboa, 2829--516 Caparica, Portugal}

\author{Jan Philipp Wächter}
\address{Department of Mathematics,
  University of Manchester\\
  Oxford Road,
  Manchester M13 9PL, UK}

\author{Janette Welker}
\address{Theoretical Computer Science Group,
  Goethe University Frankfurt\\
  Robert-Mayer-Str.\ 11-15,
  60325 Frankfurt am Main, Germany}

\email{tarabrough@gmail.com}
\email{j.ph.waechter@gmail.com} 
\email{welker@em.uni-frankfurt.de} 

\maketitle

\begin{abstract}
  We improve on earlier results on the closure under free products of the class of automaton semigroups. We consider partial automata and show that the free product of two self-similar semigroups (or automaton semigroups) is self-similar (an automaton semigroup) if there is a homomorphism from one of the base semigroups to the other. The construction used is computable and yields further consequences. One of them is that we can adjoin a free generator to any self-similar semigroup (or automaton semigroup) and preserve the property of self-similarity (or being an automaton semigroup).

  The existence of a homomorphism between two semigroups is a very lax requirement; in particular, it is satisfied if one of the semigroups contains an idempotent. To explore the limits of this requirement, we show that no simple or $0$-simple idempotent-free semigroup is a finitely generated self-similar semigroup (or an automaton semigroup). Furthermore, we give an example of a pair of residually finite semigroups without a homomorphism from one to the other.
\end{abstract}


\section{Introduction}

The problem of presenting (finitely generated) free groups and semigroups in a self-similar way has a long history \cite{rodaro2022selfSimilarity}. A self-similar presentation in this context is typically a faithful action on an infinite regular tree (with finite degree) such that, for any element and any node in the tree, the action of the element on the (full) subtree rooted at the node is the same as that of a (possibly different) element on the entire tree (i.\,e.\ at the root). The idea for the name here is that the action on a full subtree is similar to the action of the group or semigroup on the entire tree. An important special case of such a self-similar presentation occurs when there is a finite set of generators such that the action of any generator on the subtree below any node is the same as the action of some (potentially different) generator at the root. By identifying the nodes of the infinite regular tree with the strings over an appropriate finite alphabet, we can describe such an action using a finite automaton (more precisely, a finite-state letter-to-letter -- or synchronous -- transducer), which leads to the class of \emph{automaton semigroups} and \emph{automaton groups} (also often called `automata groups'). If we relax the finite-state requirement and also consider infinite automata, we can even describe any self-similar action in this way. This is the approach we will take in this paper.

There is a quite interesting evolution of constructions to present free groups in a self-similar way or even as automaton groups (see \cite{rodaro2022selfSimilarity} for an overview). This culminated in constructions to present free groups of arbitrary rank as automaton groups where the number of states coincides with the rank \cite{vorobets2010series, steinberg2011automata}. While these constructions and the involved proofs are generally deemed quite complicated, the situation for semigroups turns out to be much simpler. While it is known that the free semigroup of rank one is not an automaton semigroup \cite[Proposition~4.3]{cain_1auto}, the free semigroups of higher rank can be generated by an automaton \cite{grigorchuk2001lamplighter, silva2005class} (see also \cite[Proposition~4.1]{cain_1auto}). In fact, the construction to generate these semigroups is quite simple \cite[Proposition~4.1]{cain_1auto} (compare also to \autoref{ex:freeSemigroup}). The same construction can also be used to generate free monoids as automaton semigroups or monoids. Here, the main difference is that the free monoid in one generator can indeed be generated by an automaton: it is generated by the adding machine (see \autoref{ex:addingMachine}), which also generates the free group of rank one if inverses are added. On a side note, it is also worthwhile to point out that -- although there does not seem to be much research on the topic -- there are examples to generate the free inverse semigroup of rank one as a subsemigroup of an automaton semigroup \cite[Theorem~25]{Oliynyk2010inverse} and an adaption to present the free inverse monoid of rank one as an automaton semigroup \cite[Example~2]{decidabilityPart} (see also \cite[Example~23]{structurePart} and \cite{kochubinska2024monogenic}).

While the question which free groups and semigroups can be generated using automata is settled, there is a related natural question, which is still open: is the free pro\-duct of two automaton/self-similar (semi)groups again an automaton/self-similar (semi)group? The free product of two groups or semigroups $X = \langle P \mid \mathcal{R} \rangle$ and $Y = \langle Q \mid \mathcal{S} \rangle$ is the group or semigroup $X \star Y = \langle P \cup Q \mid \mathcal{R}\cup \mathcal{S} \rangle$. Here it is very important to make the distinction whether we consider these presentations to be semigroup presentations (i.\,e.\ we work in the category of semigroups) or group presentations (i.\,e.\ we work in the category of groups). In particular, the free product in the sense of semigroups of two groups is not a group (as it is not even a monoid).

There are quite a few results on free (and related) products of self-similar or automaton groups (again see \cite{rodaro2022selfSimilarity} for an overview) 
but many of them present the product as a subgroup of an automaton/self-similar group and, thus, lose the self-similarity property. An exception here is a line of 
research based on the Bellaterra automaton which resulted in a construction to generate the free product of an arbitrary number of copies of the group of order two as an 
automaton group \cite{savchuk2011automata} (see also \cite{steinberg2011automata}).
However, there do not seem to be constructions for presenting arbitrary free products of self-similar groups in a self-similar way.

For semigroups, the situation is much better understood.  In fact, the free product of two automaton semigroups $S$ and $T$ is always at least very close to being an 
automaton semigroup: adjoining an identity to $S\star T$ results in an automaton semigroup \cite[Theorem~3]{bc_automaton1}. 
Thus, if we drop the self-similarity requirement and consider subsemigroups of automaton semigroups (sometimes called \emph{semigroups generated by automata}), the closure of this class under free products is assured.
Recently, it was even shown that the class of semigroups generated by automata over a fixed alphabet is closed under free product \cite{kochubinska2025free}.

Unlike for groups, there are also very general results on free products of self-similar semigroups which do retain self-similarity.
In \cite[Theorem~2]{bc_automaton1}, the first author and Cain showed that the presence of left identities in both factors is enough to guarantee a 
free product of two automaton semigroups to be an automaton semigroup, but conjectured that
there exist finite semigroups $S$ and $T$ such that $S\star T$ is not an
automaton semigroup \cite[Conjecture~5]{bc_automaton1}.  In \cite[Theorems~2 to 4]{bc_automaton2}, the same authors showed, however,
that not only is every free product of finite semigroups an automaton semigroup,
but whenever $S$ and $T$ are automaton semigroups either each containing an 
idempotent or both homogeneous (with respect to the presentation given by the generating automaton), then $S\star T$ is an automaton semigroup.
For her Bachelor thesis \cite{welker}, the third author modified the construction in \cite[Theorem~4]{bc_automaton2} to considerably relax the hypothesis on the base semigroups:
If $S$ and $T$ are automaton semigroups such that there exist automata for $S$ and $T$ with 
state sets $P$ and $Q$ respectively and maps $\phi:P\ra Q$ and $\psi: Q\ra P$
that extend to homomorphisms from $S$ to $T$ and from $T$ to $S$ respectively,
then $S\star T$ is an automaton semigroup \cite[Theorem~3.0.1]{welker}.
This is a strict generalization because, firstly, the maps
$\phi$ and $\psi$ can always be found if $S$ and $T$ either both contain idempotents 
(map all elements to a fixed idempotent in the other semigroup)
or are both homogeneous (map all elements to an arbitrary fixed element of the other semigroup); 
and, secondly, there is an example \cite[Example~3.3.3]{welker} of a free product satisfying 
the hypothesis of her theorem but not of \cite[Theorem~4]{bc_automaton2}.

In this paper, we extend the idea of the constructions used for these results in multiple directions.
First, we generally consider partial automata for all of our results, i.\,e.\ we do not require the generating automaton to be complete (in contrary to many other results in the literature, for example those mentioned above; see \cite{structurePart} for some results on the difference between using partial and complete automata for generating algebraic structures). Second, we show that the hypothesis on the base semigroups can be relaxed still further: If $S$ and $T$ are two automaton semigroups such that there exists a homomorphism
from one to the other, then their free product $S\star T$ is an automaton semigroup (\autoref{cor:autSmgrp}). This is again a strict generalization of \cite[Theorem~3.0.1]{welker} (even if we only consider complete automata).
Third, we show this result in the more general setting of self-similar semigroups\footnote{Note that the constructions from \cite[Theorem~2]{bc_automaton1}, \cite[Theorem~4]{bc_automaton2} and \cite{welker} mentioned above do not use that the generating automata for $S$ and for $T$ are finite. Therefore, these constructions also work for self-similar semigroups, although this is not explicitly stated there.} (\autoref{thm:main}) but observe that the constructed generating automaton for $S \star T$ is finite (and/or complete) if this was the case for the original two automata generating $S$ and $T$. The existence of a homomorphism from $S$ to $T$ (or vice-versa) is a very lax requirement and is satisfied by large classes of semigroups. For example, it suffices to have an idempotent (\autoref{cor:idempotent}) or a length function (\autoref{cor:lengthFunction}) in (at least) one of the two semigroups. By induction, we can even extend the result to arbitrary free products of (finitely many) semigroups where at least one contains an idempotent (\autoref{cor:ind}). The construction itself yields further results. As an example, we modify it to show that a new free generator can be adjoined to any self-similar semigroup (or automaton semigroup) without losing the property of self-similarity (or being an automaton semigroup; \autoref{thm:freeGenerator}). This is noteworthy because -- as mentioned above -- the free semigroup of rank one is not an automaton semigroup (not even if we allow partial automata, see \cite[Theorem~19]{structurePart} and \cite[Theorem~1.2.1.4]{waechter2020automaton}).

While our main result significantly relaxes the hypothesis for showing that the free product of self-similar semigroups (or automaton semigroups) is self-similar (an automaton semigroup), it does not settle the underlying question whether these semigroup classes are closed under free product. It is possible that there is a different construction for the free product $S \star T$ of two self-similar or automaton semigroup without the requirement of a homomorphism from one to the other and it is also possible that there is a pair of self-similar (or automaton) semigroups such that $S \star T$ is not a self-similar (or an automaton semigroup). In this case, however, no homomorphism $S \to T$ or $T \to S$ can exist. Thus, to make progress in either direction (towards a better construction or towards a counter-example), we need to look at pairs $S, T$ of self-similar (or even automaton) semigroups without a homomorphism from one to the other. However, it turns out that finding such a pair is not easy. In particular, neither $S$ nor $T$ may contain an idempotent. Thus, we have to consider idempotent-free semigroups here. We will show, however, that we cannot find a pair of such semigroups in the class of finitely generated simple semigroups. More precisely, using results by Jones on idempotent-free semigroups \cite{jones_bicyclic}, we show that finitely generated simple (or $0$-simple) idempotent-free semigroups are not residually finite (\autoref{thm:simpleIdempotentFreeIsNotResiduallyFinite}) and, thus, not self-similar (and, in particular, not automaton semigroups; \autoref{cor:simpleIdempotentIsNotAutomaton}). We then conclude the paper with an example\footnote{The authors would like to thank Emanuele Rodaro for his help in finding this example.} of a finitely generated residually finite semigroup (\autoref{prop:SnotResiduallyFinite}) which has no homomorphism to its opposite semigroup (\autoref{prop:SHasNoHoms}). While this comes close to the sought pair $S, T$, it is not clear whether the given semigroup is self-similar (\autoref{q:SSelfSimilar}).

\section{Preliminaries}

\paragraph*{\textbf{Functions, Alphabets and Words.}}
We write function applications on the right. For example, for a function $f: A \to B$ and an element $a \in A$, we write $af = (a)f$ for the (unique) value of $f$ at $a$.

An \emph{alphabet} is a finite, non-empty set $A$ and a finite sequence $w = a_1 \dots a_n$ of elements 
$a_1, \dots, a_n \in A$ is a \emph{string} or \emph{word} (over $A$), whose \emph{length} is $n$. 
The set of all strings over $A$, including the one of length $0$ (the \emph{empty string}, denoted $\varepsilon$), is denoted by $A^*$. 
Furthermore, we let $A^+ = A^* \setminus \{ \varepsilon \}$. 
We will also sometimes consider finite sequences over an infinite set. 

\paragraph*{\textbf{Semigroups and Their Free Products.}}
We assume familiarity with the basic concepts of semigroup theory and use common notation from this area.
A semigroup $S$ is \emph{generated} by a set $Q$ if every element $s \in S$ can be written as a product $q_1 \dots q_n$ of factors from $Q$.
If there exists a finite generating set for $S$, then $S$ is \emph{finitely generated}.
For a semigroup $S$ generated by $Q$, and two words $w,w'\in Q^+$, we write $w =_S w'$ if $w$ and $w'$ represent the same element of $S$.

Every semigroup $S$ can be presented as the set $Q^+ / \mathcal{R}$ of the classes of some congruence relation 
$\mathcal{R} \subseteq Q^+ \times Q^+$ with the (well-defined) operation $[w_1]_\mathcal{R} [w_2]_\mathcal{R} = [w_1 w_2]_\mathcal{R}$. 
We also use the standard presentation of semigroups $\langle Q \mid \ell_1 = r_1, \dots \rangle$ with $\ell_1, \dots, r_1, \ldots \in Q^+$, 
which denotes the semigroup $Q^+ / \mathcal{R}$ where $\mathcal{R} \subseteq Q^+ \times Q^+$ is the smallest congruence with 
$(\ell_1, r_1), \ldots \in \mathcal{R}$. 

The \emph{free product} of two semigroups $R = \langle P \mid \mathcal{R} \rangle$ and $S = \langle Q \mid \mathcal{S} \rangle$ 
(with $P \cap Q = \emptyset$) is the semigroup with presentation $\langle  P \cup Q \mid \mathcal{R} \cup \mathcal{S} \rangle$, denoted $R\star S$.

Note that there is a difference between the free product in the category of semigroups and the free product in the category of monoids or groups. 
In particular, in the semigroup free product (which we are exclusively concerned with in this paper) there is no amalgamation over the identity element of two monoids. Thus, the free product (in the category of semigroups) of two groups, for example, is not a group.

\paragraph*{\textbf{Automata.}}
In the setting of this work, an \emph{automaton} is a triple $\A= (Q, A, \delta)$ where 
\begin{itemize}
  \item $Q$ is a set, whose elements are called \emph{states}, 
  \item $A$ is a finite alphabet, whose elements are called \emph{letters} or \emph{symbols}, and 
  \item $\delta$ is a partial function $\delta: Q \times A \rightarrow Q \times A$, the \emph{transition function}.
\end{itemize}
In the theory of automaton semigroups, the definition of automata used is often more restrictive than this, with $Q$ required to be finite, 
and $\delta$ required to be a total function.  (Recall that the alphabet $A$ is, by definition, finite.)
So the reader should be aware that, in contrast to many other works, we will explicitly use the term \emph{finite automaton} when we require $Q$ to be finite and we explicitly call the automaton \emph{complete} when $\delta$ is a total function.

In more automata-theoretic settings, a finite automaton would be called a deterministic finite state, letter-to-letter (or synchronous) transducer (see for example \cite{lawson2004finite, linz2011introduction} for introductions on standard automata theory). However, the term \emph{automaton} is standard in our algebraic setting (although often only complete automata are considered).

A transition $(p,a)\delta = (q,b)$ in an automaton $\A$ is understood to mean that if $\A$ is in state $p$ and reads input $a$, 
then $\A$ moves to state $q$ and outputs $b$.  
In the typical graphical representation of automata, such a transition is depicted as
\begin{center}
  \begin{tikzpicture}[baseline=(q.base), auto, shorten >=1pt, >=latex]
    \node[state] (p) {$p$};
    \node[state, right=of p] (q) {$q$};
    \draw[->] (p) edge node {$a / b$} (q);
  \end{tikzpicture}
  .
\end{center}
The above transition can also be depicted as a \emph{cross diagram}:
\begin{center}
    \begin{tikzpicture}[>=latex, baseline=(m-2-3.base)]
      \matrix (m) [matrix of math nodes, text height=\ht\strutbox, text depth=0.25ex]
      {
          & a &   \\
        p &   & q \\
          & b &   \\
      };
      \drawArrows{2}{2}
    \end{tikzpicture}.
\end{center}
Multiple cross diagrams can be combined. For example, \autoref{sfig:multipleCrossDiagrams} combines the transitions
$(q_{i, j}, a_{i, j})\delta = (q_{i, j + 1}, a_{i + 1, j})$ for all $0 \leq i < n$ and $0 \leq j < m$ 
(where $q_{i, j} \in Q$ and $a_{i, j} \in A$). The same cross diagram can also be abbreviated using the 
short-hand notation given in \autoref{sfig:shortHandCrossDiagrams}, where we let 
$w = q_{1, 0} q_{2, 0} \dots q_{n, 0}$, $w' = q_{1, m} q_{2, m} \dots q_{n, m}$, $\alpha = a_{0, 1} a_{0, 2} \dots a_{0, m}$ and $\alpha' = a_{n, 1} a_{n, 2} \dots a_{n, m}$.

\begin{figure}\centering
  \begin{subfigure}{0.6\linewidth}\centering
    \begin{tikzpicture}[>=latex, auto]
      \matrix (m) [matrix of math nodes, text height=\ht\strutbox, text depth=0.25ex]
      {
                 & a_{0, 1} &          & a_{0, 2} &        & a_{0, m} &          \\
        q_{1, 0} &          & q_{1, 1} &          & \cdots &          & q_{1, m} \\
                 & a_{1, 1} &          & a_{1, 2} &        & a_{1, m} &          \\
        q_{2, 0} &          & q_{2, 1} &          & \cdots &          & q_{2, m} \\
                 & \vdots   &          & \vdots   &        & \vdots   & \\
        q_{n, 0} &          & q_{n, 1} &          & \cdots &          & q_{n, m} \\
                 & a_{n, 1} &          & a_{n, 2} &        & a_{n, m} &          \\
      };
      \drawArrows{2,4,6}{2,4,6}
      \draw[decorate, decoration={brace}] (m-6-1.south west) -- node {$w$} (m-2-1.north west);
      \draw[decorate, decoration={brace}] (m-1-2.north west) -- node {$\alpha$} (m-1-6.north east);
      \draw[decorate, decoration={brace}] (m-2-7.north east) -- node {$w'$} (m-6-7.south east);
      \draw[decorate, decoration={brace}] (m-7-6.south east) -- node {$\alpha'$} (m-7-2.south west);
    \end{tikzpicture}
    \caption{Multiple combined cross diagrams}\label{sfig:multipleCrossDiagrams}
  \end{subfigure}%
  \begin{subfigure}{0.4\linewidth}\centering
    \begin{tikzpicture}[>=latex, auto]
      \matrix (m) [matrix of math nodes, text height=\ht\strutbox, text depth=0.25ex]
      {
          & \alpha  &    \\
        w &         & w' \\
          & \alpha' &    \\
      };
      \drawArrows{2}{2}
    \end{tikzpicture}
    \caption{Short-hand notation}\label{sfig:shortHandCrossDiagrams}
  \end{subfigure}
  \caption{Combining cross diagrams}
\end{figure}

Intuitively, the rows of a cross diagram indicate successive runs of the automaton. 
For example, the $i^\textnormal{th}$ row of the cross diagram in \autoref{sfig:multipleCrossDiagrams} states that 
if we start in state $q_{i, 0}$ and read the input $a_{i - 1, 1} a_{i - 1, 2} \dots a_{i - 1, m}$, we obtain the output 
$a_{i, 1} a_{i, 2} \dots a_{i, m}$ and end in state $q_{i, m}$.

We refer to elements of $Q^+$ as \emph{words} and to elements of $A^+$ as \emph{strings}.
Since our automata $\A = (Q, A, \delta)$ are deterministic, 
for every word $w\in Q^+$ and string $\alpha\in A^+$
there is at most one $w' \in Q^+$ and 
$\alpha' \in A^+$ such that the cross diagram in 
\autoref{sfig:shortHandCrossDiagrams} holds. This way, $\A$ induces two actions:\footnote{Just like with automata, an \emph{action} in general may be partial.} one action of $Q^+$ on $A^+$ and 
one of $A^+$ on $Q^+$. The former is given by $\alpha \cdot w = \alpha'$ and the latter, the so-called \emph{dual action}, is given by $w @ \alpha = w'$. If there is no valid cross diagram with $w$ on the left and $\alpha$ at the top (i.\,e.\ if $w'$ and $\alpha'$ do not exist), then $\alpha \cdot w$ and $w @ \alpha$ are both undefined.
Furthermore, we extend the actions to $A^*$ by letting $\varepsilon \cdot w = \varepsilon$ and $w @ \varepsilon = w$ for all $w \in Q^+$ and to $Q^*$ by letting $\alpha \cdot \varepsilon = \alpha$ and $\varepsilon @ \alpha = \varepsilon$ for all $\alpha \in A^*$.
The string $\alpha' = \alpha\cdot w$ is the \emph{output} of $\A$ upon acting on $\alpha$ by $w$, while the word $w' = w @ \alpha$ is the 
\emph{restriction of $w$ at $\alpha$} (in other works sometimes denoted $w|_\alpha$).
There is a strong connection between the two actions defined by $\A$:
we have $\alpha \beta \cdot w = (\alpha \cdot w) (\beta \cdot [w @ \alpha])$ (or at least one term on either side is undefined).

Using the action $\alpha\mapsto \alpha \cdot w$, we can define the semigroup $\Sigma(\A)$ \emph{generated} by an automaton 
$\A = (Q, A, \delta)$. For this, we define the congruence $=_{\A}$ over $Q^*$ as
\[
  w =_{\A} w' \iff \forall \alpha \in A^*: \alpha \cdot w = \alpha \cdot w' \text{ (or both undefined).}
\]
While we define the congruence over $Q^*$, we are only interested in the generated semigroup and let $\Sigma(\A) = Q^+ / {=_{\A}}$ (i.\,e.\ we do not consider the empty word which would yield an identity element).
A semigroup arising in this way is called \emph{self-similar}. Furthermore, if the generating automaton is finite, it is an \emph{automaton semigroup}. 
If the generating automaton is additionally complete, we speak of a \emph{completely self-similar} semigroup or of a \emph{complete automaton semigroup}.

\begin{figure}\centering
  \begin{tikzpicture}[auto, shorten >=1pt, >=latex]
    \node[state] (+1) {$s$};
    \node[state, right=of +1] (+0) {$e$};
    
    \path[->] (+1) edge[loop left] node {$1/0$} (+1)
                   edge node {$0/1$} (+0)
              (+0) edge[loop right] node[align=left] {$0/0$\\$1/1$} (+0)
    ;
  \end{tikzpicture}
  \caption{The (binary) adding machine}\label{fig:addingMachine}
\end{figure}
\begin{figure}\centering
  \begin{tikzpicture}[auto, shorten >=1pt, >=latex]
    \node[state] (+1) {$s'$};
    \node[state, right=of +1] (+0) {$e$};
    
    \path[->] (+1) edge[loop left] node[fill=gray!25, inner sep=1pt, outer sep=1pt] {$1/1$} (+1)
                   edge node {$0/1$} (+0)
              (+0) edge[loop right] node[align=left] {$0/0$\\$1/1$} (+0)
    ;
  \end{tikzpicture}
  \caption{The unary adding machine}\label{fig:unaryAddingMachine}
\end{figure}
\begin{example}\label{ex:addingMachine}
  Consider the (complete and finite) automaton $\A = (\{ s, e \}, \{ 0, 1 \}, \delta)$ depicted in \autoref{fig:addingMachine}, called the \emph{adding machine}. 
Since we have $\alpha \cdot e = \alpha$ for all $\alpha \in \{ 0, 1 \}^*$, the state $e$ acts as the identity map and is, thus, a neutral element of $\Sigma(\A)$. In order to understand the action of $s$, we look at the cross diagram
  \begin{center}
    \begin{tikzpicture}[>=latex, baseline=(m-8-7.base)]
      \matrix (m) [matrix of math nodes, text height=\ht\strutbox, text depth=0.25ex]
      {
          & 0 &   & 0 &   & 0 &   \\
        s &   & e &   & e &   & e \\
          & 1 &   & 0 &   & 0 &   \\
        s &   & s &   & e &   & e \\
          & 0 &   & 1 &   & 0 &   \\
        s &   & e &   & e &   & e \\
          & 1 &   & 1 &   & 0 &   \\
        s &   & s &   & s &   & e \\
          & 0 &   & 0 &   & 1 &   \\
      };
      \drawArrows{2,4,6,8}{2,4,6}
    \end{tikzpicture}.
  \end{center}
  If we consider strings from $\{ 0, 1 \}^+$ as binary numbers in reverse (with the least significant bit on the left), it is easy to 
observe that $s$ acts as an increment on these strings (although an overflow might occur). 
Thus, $\Sigma(\A)$ is isomorphic to the free monoid in one generator, which is hence a complete automaton semigroup.
\end{example}

\begin{example}\label{ex:unaryAddingMachine}
  The free monoid in one generator can also be presented in a slightly different way as an automaton semigroup. For this, we can modify the adding machine from \autoref{ex:addingMachine} to use a unary instead of a binary counter. Let $\A' = (\{ s', e \}, \{ 0, 1 \}, \delta')$ be the automaton depicted in \autoref{fig:unaryAddingMachine}. Note that the only change compared to the adding machine from \autoref{ex:addingMachine} and \autoref{fig:addingMachine} is that we now have a $1/1$-self loop at $s'$.
  
  The state $e$ still acts as the identity map but, regarding $s'$, we now have
  \begin{center}
    \begin{tikzpicture}[>=latex, baseline=(m-6-7.base)]
      \matrix (m) [matrix of math nodes, text height=\ht\strutbox, text depth=0.25ex]
      {
           & 0 &    & 0 &    & 0 &   \\
        s' &   & e  &   & e  &   & e \\
           & 1 &    & 0 &    & 0 &   \\
        s' &   & s' &   & e  &   & e \\
           & 1 &    & 1 &    & 0 &   \\
        s' &   & s' &   & s' &   & e \\
           & 1 &    & 1 &    & 1 &   \\
      };
      \drawArrows{2,4,6}{2,4,6}
    \end{tikzpicture}.
  \end{center}
  or, more generally, $1^k 0^\ell \cdot (s')^n = 1^{k + n} 0^{\ell - n}$ and $(s')^n @ 1^k 0^\ell = e^n$ (for $\ell \geq n$), which again shows that all powers of $s'$ are distinct in the semigroup.
\end{example}

\begin{figure}[b]\centering
  \begin{tikzpicture}[auto, shorten >=1pt, >=latex]
    \node[state] (a) {$a$};
    \node[state, right=of a] (b) {$b$};
    \draw[->] (a) edge[loop left] node {$a/a$} (a)
                  edge[bend left] node {$b/a$} (b)
              (b) edge[loop right] node {$b/b$} (b)
                  edge[bend left] node {$a/b$} (a);
  \end{tikzpicture}
  \caption{An automaton generating the free semigroup $\{ a, b \}^+$}\label{fig:abplus}
\end{figure}
\begin{example}\label{ex:freeSemigroup}
  Let $\A= (Q, A, \delta)$ be the complete and finite automaton with state set $Q=\{a,b\}$, alphabet $\Sigma=\{a,b\}$ and the transitions
  \begin{align*}
    \delta: Q \times A &\to Q \times A \\
    (c, d) &\mapsto (d, c), \quad c,d\in \{a,b\}
  \end{align*}
  (see \autoref{fig:abplus} and \cite[Proposition~4.1]{cain_1auto}).
  
  The action of $w = bab \in Q^+$ on $\alpha = aaa$ is given by the cross diagram
  \begin{center}
    \begin{tikzpicture}[>=latex, baseline=(m-6-7.base)]
      \matrix (m) [matrix of math nodes, text height=\ht\strutbox, text depth=0.25ex]
      {
          & a &   & a &   & a &   \\
        b &   & a &   & a &   & a \\
          & b &   & a &   & a &   \\
        a &   & b &   & a &   & a \\
          & a &   & b &   & a &   \\
        b &   & a &   & b &   & a \\
          & b &   & a &   & b &   \\
      };
      \drawArrows{2,4,6}{2,4,6}
    \end{tikzpicture}.
  \end{center}
  Thus, we have $aaa \cdot bab = bab$ and $bab @ aaa = aaa$.
  
  It is not difficult to see that we have $\alpha \cdot w = w$ and $w @ \alpha = \alpha$, in general,
for all $w \in Q^n$ and $\alpha \in A^n$ (for the same $n > 0$). Thus, all pairs of distinct words $w, w' \in Q^+$ 
act differently on some witness and we obtain $\Sigma(\A) = \{ a, b \}^+$.
\end{example}

\paragraph*{\textbf{Assuming Non-Trivial Action.}}
Any self-similar semigroup can be generated by an automaton in which no word acts trivially (i.\,e.\ it acts as the identity map) on the set of all strings.
Let $\A = (Q,A,\delta)$ and for an arbitrary $a\in A$, let $A'=A\cup \{a'\}$ where $a'$ is a symbol not already in $A$.
Define a new automaton $\A' = (Q, A', \delta')$
with $\delta'$ being the extension of $\delta$ to $Q\times A'$ given by $(q,a')\delta' = (q,a)\delta$ (or both undefined) for all $q\in Q$.
Then $\Sigma(\A')\cong \Sigma(\A)$ and no word in $Q^+$ acts trivially on $(A')^*$. Furthermore, $\A'$ is complete if $\A$ is. We summarize this in the following fact.
\begin{fact}\label{fct:nonTrivialAction}
  For every (complete) automaton $\A$ with state set $Q$, there is a (complete) automaton $\A'$ with state set $Q$, $\Sigma(\A')\cong \Sigma(\A)$ and where no word $w \in Q^+$ acts as the 
identity map (i.\,e.\ for every $w \in Q^+$, there is some string $\alpha$ over the alphabet of $\A'$ such that $\alpha \cdot w \neq \alpha$, including the case that the left-hand side is undefined).
\end{fact}
\noindent{}Note that the constructed automaton $\A'$ remains finite if $\A$ is finite and that the construction is computable (under suitable computability assumptions on $\A$).

\paragraph*{\textbf{Union and Power Automaton.}}
From two automata $\A = (P, A, \sigma)$ and $\B = (Q, A, \tau)$ over a common alphabet (with disjoint state sets), we can build their \emph{union automaton} $\A \cup \B = (P \cup Q, A, \sigma \cup \tau)$ whose transitions are given by
\begin{align*}
  \sigma \cup \tau : (P \cup Q) \times A &\to (P \cup Q) \times A\\
  (r, a) &\mapsto
    \begin{cases}
      \sigma(r, a) & \text{if } r \in P, \\
      \tau(r, a) & \text{if } r \in Q \text{.}
    \end{cases}
\end{align*}
The union of two finite (complete) automata is finite (complete).

From the same two automata, we can also build their \emph{composition automaton} $\A \B = (PQ, A, \sigma\tau)$ where $PQ = \{ pq \mid p \in P, q \in Q \}$ is the cartesian product of $P$ and $Q$ and the transitions are given by $\sigma\tau : PQ \times A \to PQ \times A$ where $(pq, a) \mapsto (p' q', c)$ when $(p, a)\sigma = (p', b)$ is defined and $(q, b)\tau = (q', c)$ is also defined for $b = a\cdot p$ (where the action is with respect to $\A$). Otherwise, $\sigma\tau$ is undefined at $(pq, a)$. Note that the composition of automata is associative and that the resulting automaton is finite (complete) if both original automata were.

The \emph{$i^{\textit{th}}$ power automaton} $\A^i = (Q^i, A, \delta^i)$ of an automaton $\A = (Q, A, \delta)$ is the $i$-fold composition of $\A$ with itself. Note that any $w \in Q^i$ can be seen as a word over the states of $\A$ and as a state of $\A^i$. However, the corresponding actions of $w$ on some $\alpha \in A^*$ under both views coincide. This allows us to consider any word $w \in Q^+$ as a state of the automaton $\A \cup \A^{|w|}$ without changing the generated semigroup (i.\,e.\ we have $\Sigma(\A \cup \A^{|w|}) = \Sigma(\A)$). 
This is a typical application of the power automation.

\begin{fact}\label{fact:power}
  For any automaton $\mathcal{A} = (Q, A, \delta)$ and any finite subset $X \subseteq \Sigma(\mathcal{A})$, there exists an automaton $\mathcal{A}' = (Q', A, \delta')$ generating $\Sigma(\mathcal{A})$ with $X \subseteq Q'$. This automaton is finite (complete) if $\mathcal{A}$ is.
\end{fact}

Note that, since the composition and, thus, the power construction is computable, the automaton from \autoref{fact:power} can be computed on the input of an automaton generating $S$ and the finite set $X$ as words over the state set of that automaton.

\section{The main theorem}

We prove our main theorem using a construction heavily inspired by the original 
construction in \cite[Theorem~4]{bc_automaton2} and the modifications for the generalization in \cite{welker}.
The major difference is that the earlier constructions were symmetric in the base semigroups $S$ and $T$.
Although both works refer only to automaton semigroups, the finiteness of the generating set is never used
in either, and the constructions therein work also for general (but complete) self-similar semigroups.

\begin{theorem}\label{thm:main}
Let $S$ and $T$ be (completely) self-similar semigroups such that there exists
a homomorphism from $S$ to $T$.
Then $S\star T$ is a (completely) self-similar semigroup.
\end{theorem}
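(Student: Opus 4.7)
The plan is to generalise Welker's construction \cite[Theorem~3.0.1]{welker} by explicitly building an automaton $\mathcal{C}$ whose generated semigroup is $S\star T$. I would start from automata $\mathcal{A}=(P,X_1,\cdot)$ generating $S$ and $\mathcal{B}=(Q,X_2,\cdot)$ generating $T$, and use the given homomorphism $\phi\colon S\to T$ to play the role of Welker's ``forward'' homomorphism, while replacing the ``backward'' homomorphism by a bookkeeping mechanism built from cross states.

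The first preparatory step is to realise $\phi$ at the state level. For each $p\in P$, the element $\phi(p)\in T$ is represented by some word $w_p\in Q^+$. By adjoining to $\mathcal{B}$ finitely many new states, one per $p\in P$ and each acting as the corresponding product $w_p$, I may assume without loss of generality that $\phi$ is induced by a function $\bar\phi\colon P\to Q$ at the state level. This enlargement does not change the semigroup generated by $\mathcal{B}$, which remains $T$.

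With $\bar\phi$ in hand, I would build $\mathcal{C}$ on the alphabet $X_1\cup X_2\cup\{\$\}$ with state set $P\cup Q\cup\{\$,\cg\}$, where $\$$ and $\cg$ are the cross states suggested by the preamble's macros. On matching alphabet letters, $P$- and $Q$-states act as in $\mathcal{A}$ and $\mathcal{B}$ respectively. A state $p\in P$ reading an $X_2$-letter leaves the letter unchanged and transitions to $\bar\phi(p)$, implementing $\phi$. The novel ingredient is the action of $q\in Q$ on an $X_1$-letter: rather than collapsing $q$ to a $P$-state (which would require a homomorphism $T\to S$), the automaton routes $q$ through the cross states $\$$ and $\cg$ in such a way that the interleaving pattern of $P$- and $Q$-generators is recorded in the cross column of the output tree and cannot be forgotten.

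The main obstacle is then showing that the subsemigroup of the generated automaton semigroup picked out by the generators $P\cup Q$ is exactly $S\star T$. That every free-product relation holds is the comparatively easy direction, reducing to checking that $P$ and $Q$ act correctly on their own subalphabets. The harder direction is to show that no additional relation is imposed: one must verify that alternating words in $(P\cup Q)^+$ representing distinct elements of $S\star T$ induce distinct actions on $(X_1\cup X_2\cup\{\$\})^*$. The correctness of the cross-state mechanism replacing Welker's backward homomorphism is the heart of the argument, and I expect the verification to proceed by induction on the length of words, tracking how the cross states $\$$ and $\cg$ propagate through the computation and preserve the interleaving information.
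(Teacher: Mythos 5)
Your outline reproduces the easy half of the construction (using $\phi$ to let $P$-states cross into $Q$-territory) but leaves the genuinely hard half undefined, and it is precisely there that the paper does something quite different from what you describe. You never say what a state $q\in Q$ does on an $X_1$-letter beyond ``routes $q$ through the cross states''. Whatever rule you choose must meet two competing demands: (a) the action of $Q^+$ on all of $(X_1\cup X_2\cup\{\$\})^*$ --- including on $X_1$-letters --- must factor through $T$, or the automaton does not define an action of $S\star T$ at all; and (b) the action must still separate distinct alternating products. Demand (a) is exactly the obstruction that a homomorphism $T\to S$ would resolve and that your hypothesis does not supply; calling the replacement a ``bookkeeping mechanism'' does not discharge it, and your claim that well-definedness reduces to ``checking that $P$ and $Q$ act correctly on their own subalphabets'' is false for the same reason. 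The paper's solution is structurally different from a disjoint-union alphabet: it uses a \emph{product} alphabet of dominoes $\domino{a}{b}$ with $a\in A$, $b\in B$, each in one of four marking states, together with three gate \emph{letters} $\$,\hat{\$},\ol{\$}$ (not states). States of $Q_2$ only ever read or rewrite the second component of a domino, and act as the identity without changing state on unmarked and circled dominoes and on gates, so their action on the whole alphabet tautologically factors through $T$; the marking and gate machinery is what records the interleaving. It is not clear that any assignment of transitions over your disjoint-union alphabet can satisfy (a) and (b) simultaneously, and you offer no candidate.

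There is a second gap: you add $\$$ and $\ol{\$}$ as extra \emph{states} and then aim to show that the subsemigroup generated by $P\cup Q$ is $S\star T$. That does not prove the theorem. An automaton semigroup is generated by \emph{all} states of its automaton, so your $\Sigma(\mathcal{C})$ would have the cross states as additional generators; exhibiting $S\star T$ as a finitely generated subsemigroup of an automaton semigroup is not known to imply that it is itself an automaton semigroup. You would need to show the cross states represent elements of $\langle P\cup Q\rangle$, or eliminate them from the state set. (By contrast, your preparatory step --- realising $\phi$ at the state level by adjoining finitely many product states to $\mathcal{B}$ --- is sound and is exactly how the paper justifies assuming $s\phi\in Q_2$ for all $s\in Q_1$.)
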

\begin{proof}
Let $\A = (P, A, \sigma)$ and $\B = (Q, B, \tau)$ be generating automata for $S$ and $T$ 
respectively with $P \cap Q = \emptyset$ and $A \cap B = \emptyset$.
Let $\phi: P^+ \to Q^+$ be a map which induces a homomorphism $S \to T$.
By Fact~\ref{fact:power}, we may assume that $s\phi\in Q$ for all $s\in P$ and extend $\phi$ into a homomorphism 
$(P \cup Q)^* \to Q^*$ by letting $\varepsilon \phi = \varepsilon$ and $t \phi = t$ for all $t \in Q$.
Note that, thus extended, $\phi$ also induces a homomorphism $S \star T \to T$.

By Fact~\ref{fct:nonTrivialAction}, we can, without further modifying the state sets of the generating automata,
assume that no word in $P^+$ (respectively $Q^+$) acts as the identity map on $A^*$ (respectively $B^*$).
Note that this modification does not disrupt our assumption that $s\phi\in Q$ for all $s\in P$.

We construct an automaton $\C = (R, C, \delta)$ with $\Sigma(\C) = S \star T$.
Let $R = P \cup Q$ and
\[
  C = \gset{ a, a^S, a^\circ } {a \in A} \cup \gset{ b, b^S, b^T, b^\circ } {b \in B} \cup \{ \$, \hat{\$}, \ol{\$} \} \text{.}
\]
For $a \in A$ and $b \in B$, we say that $a$ and $b$ are \emph{unmarked}, $a^S$ and $b^S$ are 
\emph{$S$-marked}, $b^T$ is \emph{$T$-marked} and $a^\circ$ and $b^\circ$ are \emph{circled}. 
We use this notation also for strings. For example, for $\alpha = a_1 \dots a_\ell$ with 
$a_1, \dots, a_\ell \in A$, we write $\alpha^\circ$ for the string $a_1^\circ \dots a_\ell^\circ$.
The symbols $\$, \hat{\$}, \ol{\$}$ are called \emph{gates} and we refer to them respectively as 
\emph{open}, \emph{half-open} and \emph{closed} gates.
The transformation $\delta$ is defined as follows.
For $s\in P$, $t\in Q$, $a\in A$, $b\in B$ with $(s,a)\sigma = (s_0,a_0)$ (if defined) and $(t,b)\tau = (t_0,b_0)$ (if defined),
the action of $R$ on $R \times C$ is given by (see also \autoref{fig:schematic}):%
\renewcommand{\arraystretch}{1.5}%
\begin{center}%
  \resizebox{\linewidth}{!}{%
  \begin{tabular}{r|cccccccccc}
        & $a$ & $a^S$ & $a^\circ$ & $b$ & $b^S$ & $b^T$ & $b^\circ$ & $\$$ & $\hat{\$}$ & $\ol{\$}$ \\\hline
    $s$ & $(s_0, a_0^S)$ & $(s_0, a_0^S)$ & $(s, a^\circ)$ & $(s, b^S)$ & $(s, b^S)$ & $(s, b^\circ)$ & $(s, b^\circ)$ & $(s, \$)$ & $(s\phi, \hat{\$})$ & $(s\phi, \hat{\$})$ \\
    $t$ & $(t, a)$ & $(t, a^\circ)$ & $(t, a^\circ)$ & $(t, b)$ & $(t_0, b_0^T)$ & $(t_0, b_0^T)$ & $(t, b^\circ)$ & $(t, \$)$ & $(t, {\$})$ & $(t, \ol{\$})$
  \end{tabular}}
\end{center}
If $(s,a)\sigma$ or $(t,b)\tau$ is undefined, then $\delta$ is also undefined on the corresponding inputs. Note that $\C$ is complete if (and only if) $\A$ and $\B$ are.

\begin{figure}\centering
  \begin{tikzpicture}[auto, shorten >=1pt, >=latex]
    \node[state] (s) {$s$};
    \node[state, right=2cm of s, dashed] (s_0) {$s_0$};
    
    \draw[->] (s) edge[dotted] node[align=center]
                    {$a^{\phantom{S}} / a_0^S$\\
                     $a^{S} / a_0^S$} (s_0)
                  edge[loop below] node[align=center]
                    {$a^\circ / a^\circ$\\
                     $b^\circ / b^\circ$} (s)
                  edge[loop left] node[align=center] (1left)
                    {$b^{\phantom{T}} / b^S$\\
                     $b^{\makebox[0pt][l]{$\scriptstyle S$}\phantom{T}} / b^S$\\
                     $b^{T} / b^\circ$} (s)
                  edge[loop above] node (1top) {$\$/\$$} (s)
    ;

    \node[state, right=3cm of s_0] (t) {$t$};
    \node[state, right=2cm of t, dashed] (t_0) {$t_0$};
    
    \draw[->] (t) edge[dotted] node[align=center]
                    {${b}^{S} /\, b_0^T$\\
                     ${b}^{T} /\, b_0^T$} (t_0)
                  edge[loop below] node[align=center]
                    {$a / a$\\
                     $b / b$\\
                     $a^\circ / a^\circ$\\
                     $b^\circ / b^\circ$} (t)
                  edge[loop left] node[align=center] (2left)
                    {$a^S / a^\circ$} (t)
                  edge[loop above] node[align=center] (2top) {$\ol{\$}/\ol{\$}$\\$\hat{\$}/{\$}$\\$\$/\$$} (t)
    ;
    
    \node[state, below=3cm of t, dashed] (sphi) {$s\phi$};
    
    \draw[->] (s) edge node[swap, align=center] {$\ol{\$}/\hat{\$}$\\$\hat{\$}/\hat{\$}$} (sphi);
  
    \path (2left.west) |- coordinate (2topleft) (2top.north);
    \path (sphi.south) -| coordinate (2bottomright) (t_0.east);
    \draw[rounded corners, dashed] ($(2topleft)+(-0.25cm,0.5cm)$) rectangle ($(2bottomright)+(0.25cm,-0.5cm)$);
    \path ($(2topleft)+(-0.25cm,0.5cm)$) -| node[below left] {$\B$} ($(2bottomright)+(0.25cm,-0.5cm)$);
    
    \path (1left.west) |- coordinate (1topleft) (2topleft);
    \path (s_0.east) |- coordinate (1bottomright) (sphi.south);
    \draw[rounded corners, dashed] ($(1topleft)+(-0.25cm,0.5cm)$) rectangle ($(1bottomright)+(0.25cm,-0.5cm)$);
    \node[anchor=north west] at ($(1topleft)+(-0.25cm,0.5cm)$) {$\A$};
  \end{tikzpicture}
  \caption{Schematic representation of $\C$. The dotted transitions only exist if the corresponding transitions exist in $\A$ or $\B$, respectively.}\label{fig:schematic}
\end{figure}

The general idea of the construction is that the action of a word $w = u_1 v_1 \dots u_n v_n$ 
with $u_1, \dots, u_n \in P^+$ and $v_1, \dots, v_n \in Q^+$ on a string
\[
  \gamma = {\alpha_1}{\beta_1} \, \ol\$ \, {\alpha_2}{\beta_2} \, \ol\$ \dots \ol\$ \, {\alpha_n}{\beta_n},
\] 
with $\alpha_i\in A^*, \beta_i\in B^*$ is defined if and only if each $\alpha_i\cdot u_i$ and $\beta_i\cdot v_i$ for $1\leq i\leq n$ is defined, in which case it is given by
\[
  \gamma \cdot w = {(\alpha_1 \cdot u_1)}^\circ {(\beta_1\cdot v_1)}^\circ \, \$ \ldots \$ \, {(\alpha_{n-1} \cdot u_{n-1})}^\circ {(\beta_{n-1} \cdot v_{n-1})}^\circ \, 
\$ \ \, {(\alpha_n \cdot u_n)}^\circ {(\beta_n \cdot v_n)}^T \text{,}
\]
where the actions $\alpha_i \cdot u_i$  and $\beta_i \cdot v_i$ refer to those induced by $\A$ and $\B$,
respectively.

In order to formally show that $\Sigma(\C) = S \star T$, we prove
\[
  w =_{S \star T} \tilde{w} \iff \forall \gamma \in C^*: \gamma \cdot w = \gamma \cdot \tilde{w} \text{ (or both undefined)} \]
for all $w, \tilde{w} \in (P \cup Q)^+$.
That is, we prove that $\C$ defines an action of
$S\star T$ on $C^*$ (left to right implication) and that this action is faithful (right to left implication).

We begin with the direction from left to right (i.\,e.\ that $\C$ defines an action of $S \star T$ on $C^*$). It suffices to show the statement for all $w, \tilde{w} \in P^+$ with $w =_{\A} \tilde{w}$ (i.\,e.\ that the action of $P^+$ is an action of $\Sigma(\A) = S$) and for all $w, \tilde{w} \in Q^+$ with $w =_{\B} \tilde{w}$ (i.\,e.\ the action of $Q^+$ is an action of $\Sigma(\B) = T$) since this includes all relations in $S$ and in $T$ and, thus, all relations in $S \star T$.

We show that for $w,\tilde{w}\in P^+\cup Q^+$, if $w =_{\A \cup \B} \tilde{w}$ then for any $c\in C$ we have the implication
\begin{center}
  \begin{tikzpicture}[>=latex]
    \matrix (m) [matrix of math nodes,
      text height=\ht\strutbox, text depth=0.25ex,] {
        & c  & \\
      w &    & w' \\
        & c' & \\
    };
    \drawArrows{2}{2}

    \matrix[right=of m] (mtilde) [matrix of math nodes,
      text height=\ht\strutbox, text depth=0.25ex,] {
                & c  & \\
      \tilde{w} &    & \tilde{w}' \textcolor{gray}{{}=_{\A \cup \B} w'} \\
                & c' & \\
    };
    \drawArrows[mtilde]{2}{2}

    \path (m-2-3.base east) -- node[anchor=base] {$\implies$} (mtilde-2-1.base west);
  \end{tikzpicture}
\end{center}
where $w'$ (and, thus, $\tilde{w}'$) is in $P^+ \cup Q^+$.
At the same time, we also show that $c \cdot w$ is undefined if and only if $c \cdot \tilde{w}$ is.
Then, by induction, we obtain $\gamma \cdot w = \gamma \cdot \tilde{w}$ (or both undefined) for all $\gamma \in C^*$.

Since $t @ c\in Q$ for all $t \in Q$ and $c \in C$ (if it is defined), it is easier to first show the implication for $w, \tilde{w}\in Q^+$ with $w =_{\B} \tilde{w}$.
We say that a state $r \in R$ \emph{ignores} a symbol $c \in C$ if we have $(r, c)\delta = (r, c)$ (i.\,e.\ if there is a $c/c$-labeled self-loop at $r$).
With this terminology, all states in $Q$ ignore open gates, closed gates, and unmarked and circled letters, so the inductive hypothesis holds trivially for these (in particular, $c \cdot w$ and $c \cdot \tilde{w}$ are always defined in these cases).
\enlargethispage*{\baselineskip}
For the remaining types of symbols in $C$ we have the following cross-diagrams, and analogous ones for $\tilde{w}$:
\begin{center}
  \begin{tikzpicture}[>=latex]
    \matrix (m) [matrix of math nodes,
      text height=\ht\strutbox, text depth=0.25ex,] {
        & \hat{\$}  & \\
      w &    & w \\
        & \$ & \\
    };
    \drawArrows{2}{2}
    
    \matrix[right=of m] (m2) [matrix of math nodes,
      text height=\ht\strutbox, text depth=0.25ex,] {
        & a^S & \\
      w &     & w \\
        & a^\circ & \\
    };
    \drawArrows[m2]{2}{2}

    \matrix[right=of m2] (abS) [matrix of math nodes,
      text height=\ht\strutbox, text depth=0.25ex,] {
        & {b}^{{\scriptsize S}} \text{ or } {b}^{{\scriptsize T}} & \\
      w &    & w @ b \\
        & {(b\cdot w)}^T & \\
    };
    \drawArrows[abS]{2}{2},
  \end{tikzpicture}
\end{center}
The left diagram shows the implication for $\hat{\$}$ and the middle one for $a^S$. In the right one, 
$b\cdot w$ and $w @ b$ are as in $\B$, and the letter at the top may be either $S$-marked or $T$-marked. 
Note that this diagram does not exist if $b \cdot w$ is undefined in $\B$. However, this is the case if and only if $b \cdot \tilde{w}$ is undefined in $\B$ (as we have $w =_{\B} \tilde{w}$) and we then have that $b^S \cdot w$, $b^T \cdot w$, $b^S \cdot \tilde{w}$ and $b^T \cdot \tilde{w}$ are all undefined in $\C$.
Thus the induction hypothesis for $w, \tilde{w}\in Q^+$ follows since we have $b \cdot w = b \cdot \tilde{w}$ and $w @ b =_{\B} \tilde{w} @ b$ (if defined) as $w =_{\B} \tilde{w}$.

For $w, \tilde{w} \in P^+$ with $w =_{\A} \tilde{w}$, observe that all states in $P$ ignore open gates and circled letters, 
so the induction hypothesis holds trivially for these symbols.
For the remaining types of symbols, the cross diagrams are as follows:
\begin{center}
 \begin{tikzpicture}[>=latex]
    \matrix (m) [matrix of math nodes,
      text height=\ht\strutbox, text depth=0.25ex,] {
        & \hat{\$} \text{ or } \ol{\$}  & \\
      w &    & w\phi \in Q^+ \\
        & \hat{\$} & \\
    };
    \drawArrows{2}{2}
    
    \matrix[right=0.25cm of m] (aS) [matrix of math nodes,
      text height=\ht\strutbox, text depth=0.25ex,] {
        & {a} \text{ or } a^S  & \\
      w &    & w @ a \in P^+ \\
        & {(a \cdot w)}^S & \\
    };
    \drawArrows[aS]{2}{2}
    
    \matrix[right=0.25cm of aS] (bS) [matrix of math nodes,
      text height=\ht\strutbox, text depth=0.25ex,] {
        & {b} \text{ or } b^S  & \\
      w &    & w \\
        & {b}^S & \\
    };
    \drawArrows[bS]{2}{2}

    \matrix[right=0.25cm of bS] (bT) [matrix of math nodes,
      text height=\ht\strutbox, text depth=0.25ex,] {
        & {b}^T  & \\
      w &    & w  \\
        & {b}^\circ & \\
    };
    \drawArrows[bT]{2}{2}

  \end{tikzpicture}
\end{center}
The first diagram settles the remaining gates. Here, we have used that $\phi$ is a homomorphism (and that we have already covered the case $w, \tilde{w} \in Q^+$ above). In the second diagram, the letter may either be unmarked or $S$-marked and $a\cdot w$ and $w @ a$ are as in $\A$.
Again, this cross diagram does not exist if $a \cdot w$ is undefined in $\A$. However, then $a \cdot \tilde{w}$ must also be undefined (in $\A$) and the same is true for $a \cdot w$, $a^S \cdot w$, $a \cdot \tilde{w}$ and $a^S \cdot \tilde{w}$ in $\C$.
It is important to note here that we have $w@a, \tilde{w}@a \in P^+$ (if defined)
as for $s \in P, c \in C$, we have $s @ c\in Q$ only if $c$ is a closed or half-open gate,
and these types of gates never appear in the output for inputs from $A \cup A^S$. This settles the letters $a$
and $a^S$. The third diagram (where $b$ may be unmarked or $S$-marked) and the last diagram settle the remaining cases, which completes the proof of the left to right implication by induction.

It remains to show the implication from right to left (i.\,e.\ that the action of $S \star T$ on $C^*$ defined by $\C$ is faithful). We will prove this by contraposition, but first claim
\begin{equation}
  \begin{tikzpicture}[>=latex, baseline=(m-2-3.base)]
    \matrix (m) [matrix of math nodes,
      text height=\ht\strutbox, text depth=0.25ex,] {
      & \ol{\$}^k & \\
      u_1 v_1 \dots u_k v_k & & (u_1 v_1 \dots u_k v_k) \phi \textcolor{gray}{{}\in Q^+}\\
      & \$^k & \\
    };
    \drawArrows{2}{2}
  \end{tikzpicture}\label{eqn:mainTheoremClaim}
\end{equation}
for all $u_1, \dots, u_k \in P^+$, $v_1, \dots, v_k \in Q^+$ and $k \geq 0$. This follows by induction on $k$: for $k = 0$, there is nothing to show, and the inductive step is depicted in \autoref{fig:claimInductiveStep}.
\begin{figure}\centering
  \begin{tikzpicture}[>=latex]
    \matrix (m) [matrix of math nodes,
      text height=\ht\strutbox, text depth=0.25ex,] {
        & \ol{\$}^{k-1} & & \ol\$ \\
      u_1 v_1 \dots u_{k-1} v_{k-1} & & (u_1 v_1 \dots u_{k-1} v_{k-1}) \phi & & (u_1 v_1 \dots u_{k-1} v_{k-1}) \phi\\
        & \$^{k-1} & & \ol\$ \\
      u_k & & u_k & & u_k \phi \\
        & \$^{k - 1} & & \hat{\$} \\
      v_k & & v_k & & v_k = v_k \phi \\
        & \$^{k - 1} & & {\$} \\
    };
    \drawArrows{2,4,6}{2,4}
    \draw[dashed, rounded corners] (m-1-2.north -| m-2-1.west) rectangle (m-3-2.south -| m-2-3.east);
  \end{tikzpicture}
  \caption{Inductive step for the claim (\ref{eqn:mainTheoremClaim}). The dashed part follows by induction.}\label{fig:claimInductiveStep}
\end{figure}

Next, let $w, \tilde{w} \in (P \cup Q)^+$ represent different elements in $S \star T$ (i.\,e.\ $w \neq_{S \star T} \tilde{w}$). We may factorize $w$ into blocks and write
\begin{align*}
  w &= v_0 \, u_1 v_1 \dots u_m v_m \, u_{m + 1}
\end{align*}
for $m \geq 0$, $v_0 \in Q^*$, $u_1, \dots, u_m \in P^+$, $v_1, \dots, v_m \in Q^+$ and $u_{m + 1} \in P^*$.
Factorizing $\tilde{w}$ analogously yields $\tilde{m}$, $\tilde{u}_i$ and $\tilde{v}_i$ and 
we may assume $\tilde{m} \leq m$ without loss of generality. By setting $\tilde{u}_i = \varepsilon$ and 
$\tilde{v}_i = \varepsilon$ appropriately, we may write $\tilde{w} = \tilde{v}_0 \, \tilde{u}_1 \tilde{v}_1 \dots \tilde{u}_n \tilde{v}_m \, \tilde{u}_{m + 1}$.

Now, let $k$ be minimal such that $u_k\neq_{\A} \tilde{u}_k$ (i.\,e.\ there is a difference in an $S$-block) or $v_k\neq_{\B} \tilde{v}_k$ (i.\,e.\ there is a difference in a $T$-block). This includes the case that one side is empty.
If $k=0$, then $w$ and $\tilde{w}$ can already be distinguished by their actions
on a string ${\beta}^T$, where $\beta \in B^+$ is chosen such that $\beta \cdot v_0 \neq \beta\cdot \tilde{v}_0$ (or exactly one side is undefined). Such $\beta$ exists even if $v_0$ or $\tilde{v}_0$ is empty, since we chose $\B$ such that no word in $P^+$ acts as the identity map on $B^+$. We have
\begin{center}
  \begin{tikzpicture}[>=latex]
    \matrix (m) [matrix of math nodes,
      text height=\ht\strutbox, text depth=0.25ex,] {
          & {\beta}^T & \\
      v_0 & & v_0 @ \beta \\
          & {(\beta \cdot v_0)}^T & \\
      u_1 v_1 \dots u_m v_m \, u_{m + 1} & & u_1 v_1 \dots u_m v_m \, u_{m + 1} \\
          & {(\beta \cdot v_0)}^{\circ} & \\
      };
      \drawArrows{2,4}{2}
      \path[fill=gray, opacity=0.2, rounded corners] (m-4-1.west |- m-4-1.north) rectangle (m-4-3.east |- m-5-2.south);
  \end{tikzpicture}
\end{center}
if $\beta \cdot v_0$ is defined,
where the shaded part only exists for $u_1 v_1 \dots u_m v_m \, u_{m + 1} \neq \varepsilon$ (but is always defined in this case). Analogously, we get ${\beta}^T \cdot \tilde{w} \in \{ {(\beta \cdot \tilde{v}_0)}^T, {(\beta \cdot \tilde{v}_0)}^{\circ} \}$ (or that $\beta^T \cdot \tilde{w}$ is undefined). Either way, this shows $\beta^T \cdot w \neq \beta^T \cdot \tilde{w}$ (including the case that one side is undefined while the other one is not).

For $k > 0$, we now aim to find a string $\gamma \in C^*$ such that $ \gamma \cdot w \neq \gamma \cdot \tilde{w}$ for the action of $\C$.
We will choose $\gamma = \ol{\$}^{k-1} \alpha \beta$ for some $\alpha\in A^*, \beta\in B^*$.
If $u_k \neq_{\A} \tilde{u}_k$ (including the case that one side is empty, which may also happen for $u_{m + 1}$),
choose $\alpha \in A^+$ with $\alpha \cdot u_k \neq \alpha \cdot \tilde{u}_k$ in $\A$
(including the case that one side is defined while the other is not), and let $\beta = \varepsilon$.
Otherwise, $v_k \neq_{\B} \tilde{v}_k$ (including the case that $\tilde{v}_k$ is empty),
and we let $\alpha = \varepsilon$ and choose $\beta \in B^+$ with 
$\beta \cdot v_k \neq \beta \cdot \tilde{v}_k$ in $\B$ (including the case that one is defined while the other is not).
Note that in both cases $\alpha$ exists even if $u_k$ or $\tilde{u}_k$ is empty (while the other one is not), and that $\beta$ also exists if $\tilde{v}_k = \varepsilon$, since no word acts as the identity map in $\A$ or $\B$.
Then when $\alpha\cdot u_k$ and $\beta \cdot v_k$ are defined (in $\A$ and $\B$, respectively) we have 
$\gamma\cdot w = {\$}^{k - 1} (\alpha \cdot u_k)^{D_\alpha} (\beta \cdot v_k)^{D_\beta}$ where $D_\alpha, D_\beta \in \{ \varepsilon, S, T, \circ \}$.
This is shown by the cross diagram (of $\C$) depicted in \autoref{fig:caseKgt0}, where the 
light gray part does not exist if $u_k = \varepsilon$ (which may only happen if $k = m + 1$), the medium gray part
does not exist if $v_k = \varepsilon$, and the dark gray part does not exist if $u_{k + 1} = \varepsilon$. 

Since we have chosen $k$ to be minimal, we have $u_i =_{\A} \tilde{u}_i$ and $v_i =_{\B} \tilde{v}_i$ for $1 \leq i < k$ and, in particular, $\tilde{u}_i$ and $\tilde{v}_i$ are non-empty
for $1\leq i <k$. Therefore, we can still apply the claim (\ref{eqn:mainTheoremClaim}) and obtain an analogous cross diagram for $\tilde{w}$ if $\alpha \cdot \tilde{u}_k$ and $\beta \cdot \tilde{v}_k$ are both defined (in $\A$ and $\B$, respectively). Thus, in this case, $\gamma\cdot \tilde{w} = \$^{k - 1} (\alpha \cdot \tilde{u}_k)^{\tilde{D}_\alpha} (\beta \cdot \tilde{v}_k)^{\tilde{D}_\beta}$ 
(again: $\tilde{D}_\alpha, \tilde{D}_\beta \in \{ \varepsilon, S, T, \circ \}$) and,
by choice of $\alpha$ and $\beta$, we have $\gamma\cdot w \neq \gamma\cdot \tilde{w}$ in $\C$.

If $\alpha \cdot u_k$ is undefined while $\alpha \cdot \tilde{u}_k$ is defined (both in $\A$), we have $\beta = \varepsilon$ and obtain by the same cross diagram in \autoref{fig:caseKgt0} (and its analogue for $\tilde{w}$) that $\gamma \cdot w = {\$}^{k - 1} (\alpha \cdot u_k)^{D_\alpha} (\beta \cdot v_k)^{D_\beta}$ while $\gamma \cdot \tilde{w}$ is undefined (both in $\C$). The case that $\alpha \cdot u_k$ is defined while $\alpha \cdot \tilde{u}_k$ is not is symmetric. 
Finally, $\alpha \cdot u_k$ and $\alpha \cdot \tilde{u}_k$ cannot both be undefined, since our choice of $\alpha$ and $\beta$ would then imply that $\alpha = \varepsilon$, but
$\varepsilon\cdot u_k$ and $\varepsilon\cdot \tilde{u}_k$ are always defined (as $\varepsilon$).
The same arguments hold if $\beta \cdot v_k$ or $\beta \cdot \tilde{v}_k$ is undefined (in $\B$).

Thus, we have found the required string $\gamma$ in all cases and
$\C$ defines a faithful action of $S\star T$ on $C^*$,
which shows that $S\star T = \Sigma(\C)$ is self-similar, and completely self-similar if $S$ and $T$ both are.
\end{proof}

\begin{sidewaysfigure}\centering
  \begin{tikzpicture}[>=latex]
    \matrix (m) [matrix of math nodes,
      text height=\ht\strutbox, text depth=0.25ex,] {
        & \ol{\$}^{k - 1} & & \alpha & & \beta & \\
      v_0 & & v_0 & & v_0 & & v_0 \\
        & \ol{\$}^{k - 1} & & \alpha & & \beta & \\
      u_1 v_1 \dots u_{k - 1} v_{k - 1} & & (u_1 v_1 \dots u_{k - 1} v_{k - 1}) \phi & & (u_1 v_1 \dots u_{k - 1} v_{k - 1}) \phi & & (u_1 v_1 \dots u_{k - 1} v_{k - 1}) \phi\\
        & \$^{k - 1} & & \alpha & & \beta & \\
      u_k & & u_k & & u_k @ \alpha & & u_k @ \alpha \\
        & \$^{k - 1} & & (\alpha \cdot u_k)^S & & \beta^S & \\
      v_k & & v_k & & v_k & & v_k @ \beta \\
        & \$^{k - 1} & & (\alpha \cdot u_k)^\circ & & (\beta \cdot v_k)^T & \\
      u_{k + 1} v_{k + 1} \dots u_m v_m \, u_{m + 1} & & u_{k + 1} v_{k + 1} \dots u_m v_m \, u_{m + 1} & & u_{k + 1} v_{k + 1} \dots u_m v_m \, u_{m + 1} & & u_{k + 1} v_{k + 1} \dots u_m v_m \, u_{m + 1} \\
        & \$^{k - 1} & & (\alpha \cdot u_k)^\circ & & (\beta \cdot v_k)^\circ & \\
    };
    \drawArrows{2,4,6,8,10}{2,4,6}
    \draw[dashed, rounded corners] (m-3-2.north -| m-4-1.west) rectangle (m-5-2.south -| m-4-3.east);
    \path[fill=gray, opacity=0.2, rounded corners] (m-10-1.west |- m-6-1.north) rectangle (m-10-7.east |- m-11-6.south);
    \path[fill=gray, opacity=0.2, rounded corners] (m-10-1.west |- m-8-1.north) rectangle (m-10-7.east |- m-11-6.south);
    \path[fill=gray, opacity=0.2, rounded corners] (m-10-1.west |- m-10-1.north) rectangle (m-10-7.east |- m-11-6.south);
  \end{tikzpicture}
  \caption{Cross diagram to distinguish $w$ from $\tilde{w}$ if the first difference is between $u_k v_k$ and $\tilde{u}_k \tilde{v}_k$. Note that $u_k @ \alpha$ and $\alpha \cdot u_k$ are the same as in $\A$ while $v_k @ \beta$ and $v_k \cdot \beta$ are the same as in $\B$, respectively. The dashed part follows from the claim (\ref{eqn:mainTheoremClaim}) and the shaded parts may not exist if the respective parts of $w$ (or $\tilde{w}$) are empty.}\label{fig:caseKgt0}
\end{sidewaysfigure}

Counting the states and letters of the constructed automaton yields the following two corollaries (where we have to take the increased alphabet from the construction for \autoref{fct:nonTrivialAction} into account).
\begin{corollary}
  If the semigroup $S$ is generated by $\A = (P, A, \sigma)$, the semigroup $T$ is generated by $\B = (Q, B, \tau)$ (with $P \cap Q = \emptyset$ and $A \cap B = \emptyset$) and there is a homomorphism $\phi: S \to T$ with $P\phi = \gset{ p \phi }{p \in P} \subseteq Q$, then $S \star T$ is generated by an automaton with state set $P \cup Q$ and an alphabet of size at most $3 + 3 (|A| + 1) + 4 (|B| + 1)$. Furthermore, the construction is computable.
\end{corollary}

\begin{corollary}\label{cor:autSmgrp}
  If $S$ and $T$ are (complete) automaton semigroups such that there is a homomorphism $S \to T$, then $S \star T$ is a (complete) automaton semigroup.
\end{corollary}

\begin{remark}
The earlier symmetric constructions for free products used an
entirely different set of symbols to distinguish alternating products in $S\star T$ depending on 
whether they started with an element of $S$ or with an element of $T$. Furthermore, those 
constructions used symbols $\domino{a}{b}$ ($a\in A, b\in B$) (called \emph{dominoes}) 
with markings similar to the ones used above. 
In the current construction, we mark the letters from $A$ and $B$ directly and are able to 
distinguish all elements by their actions on $\ol{\$}^* A^*$, $\ol{\$}^* B^*$, or on $(B^T)^*$, where $B^T = \{ b^T \mid b \in B \}$.
This difference is, however, purely cosmetic (see \cite{freeprodrevisited} for a version of essentially the same construction
still using the `domino' symbols).

Note that our current construction is more general than \cite[Theorem~3.0.1]{welker} (and thus, in particular, also more general than \cite[Theorem~4]{bc_automaton2}): \cite[Theorem~3.0.1]{welker} states that $S \star T$ is a complete automaton semigroup if there are maps $P \to Q$ and $Q \to P$ which extend into homomorphisms $S \to T$ and $T \to S$, respectively. This hypothesis, for example, cannot be satisfied if $S$ is a free semigroup and $T$ is a finite one. However, a pair of such semigroups clearly satisfies the hypothesis of \autoref{thm:main} (or \autoref{cor:autSmgrp}).
\end{remark}

\section{Corollaries of the Construction}

The hypothesis of \autoref{thm:main} (and \autoref{cor:autSmgrp}) is not very restrictive.\footnote{In fact, it turns out to be rather difficult to find a pair $S, T$ of (finitely generated) semigroups such that there is neither a homomorphism $S \to T$ nor a homomorphism $T \to S$. We will discuss this in \autoref{sct:limits}.}
For example, if at least one of the two semigroups contains an idempotent, we obtain a homomorphism by mapping all elements of the other semigroup to this idempotent, which leads to the following corollary:
\begin{corollary}\label{cor:idempotent}
  Let $S$ and $T$ be (completely) self-similar semigroups such that $S$ or $T$ contains an idempotent. Then $S \star T$ is (completely) self-similar.
If furthermore $S$ and $T$ are (complete) automaton semigroups, then so is $S \star T$.
\end{corollary}

Another large class of semigroups for which the hypothesis of \autoref{thm:main} is satisfied is the class of semigroups admitting a length function. We say that a semigroup $S$ has a \emph{length function} if there is a homomorphism $\lambda: S \to a^+$. The idea here is that the monogenic free semigroup $a^+$ is isomorphic to the semigroup $(\mathbb{N}_{> 0}, +)$ and thus every element $s \in S$ has a \emph{length} $\lambda(s)$. Note that, in particular, free semigroups and free commutative semigroups admit a length function.
\begin{corollary}\label{cor:lengthFunction}
  Let $S$ and $T$ be (completely) self-similar semigroups such that $S$ or $T$ has a length function. Then $S \star T$ is (completely) self-similar.
If furthermore $S$ and $T$ are (complete) automaton semigroups, then so is $S \star T$.
\end{corollary}
\begin{proof}
  Due to symmetry, we may assume without loss of generality that $S$ has a length function, i.\,e.\ that there is a homomorphism $\lambda: S \to a^+$. By choosing an arbitrary element $t \in T$ and mapping $a \mapsto t$, we obtain a homomorphism from $a^+$ to the subsemigroup generated by $t$ in $T$. Composing the two homomorphisms, we obtain a homomorphism $S \to T$ and can apply \autoref{thm:main}
(or \autoref{cor:autSmgrp}).
\end{proof}

\autoref{thm:main} and \autoref{cor:autSmgrp} also generalise by induction to free products of finitely many semigroups,
in the following way.  (Note that the hypothesis is equivalent to the existence of homomorphisms
from $S_i$ to $S_n$ for all $i$.)

\begin{corollary}\label{cor:ind}
Let $S_1,\ldots,S_n$ be a sequence of (completely) self-similar semigroups such that
for $1\leq i\leq n-1$ there is a homomorphism $\phi_i: S_i\ra S_j$ for some $j>i$.
Then the free product $S_1\star S_2\star \ldots \star S_n$ is (completely) self-similar.
If furthermore $S_1,\ldots,S_n$ are (complete) automaton semigroups, then so is their free product.
\end{corollary}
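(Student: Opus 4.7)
The plan is to induct on $n$, with the base case $n=2$ being Theorem~\ref{thm:main} itself (and $n=1$ trivial). For the inductive step, assume the result for sequences of length $n-1$ and consider $S_1,\ldots,S_n$ satisfying the hypothesis.

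First I would observe that the homomorphism $\phi_{n-1}:S_{n-1}\to S_j$ for some $j>n-1$ forces $j=n$, so we have a homomorphism $\phi_{n-1}:S_{n-1}\to S_n$. Applying Theorem~\ref{thm:main} to the pair $(S_{n-1},S_n)$, the free product $T := S_{n-1}\star S_n$ is an automaton semigroup.

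Next I would replace the last two terms of the sequence by $T$ to obtain the shorter sequence $S_1,\ldots,S_{n-2},T$ of length $n-1$, and verify that this new sequence still satisfies the hypothesis of the corollary. For each $i\leq n-2$, the original homomorphism $\phi_i:S_i\to S_{j_i}$ produces a homomorphism to a strictly later term of the new sequence as follows: if $j_i\leq n-2$, keep $\phi_i$ as is; otherwise $j_i\in\{n-1,n\}$, and composing $\phi_i$ with the canonical homomorphism $S_{j_i}\to T$ arising from the universal property of the free product yields a homomorphism $S_i\to T$, where $T$ sits in position $n-1>i$ in the new sequence. The inductive hypothesis then produces an automaton semigroup $S_1\star\cdots\star S_{n-2}\star T$, which by associativity of the free product equals $S_1\star\cdots\star S_n$.

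The main (and essentially only) obstacle is checking that the truncated sequence inherits the hypothesis, which reduces to the existence of the canonical factor homomorphisms $S_{n-1},S_n\to T$. This is a standard feature of semigroup free products and presents no real difficulty, so the argument is a clean reduction to Theorem~\ref{thm:main} at each stage of the induction.
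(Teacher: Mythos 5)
Your proof is correct, and it is the same basic strategy as the paper's --- induction on $n$, reducing to Theorem~\ref{thm:main} by exploiting the fact that each factor maps canonically into a free product containing it --- but with a different grouping. The paper peels off the \emph{first} factor: by the inductive hypothesis $S' := S_2\star\cdots\star S_n$ is an automaton semigroup (the hypothesis for the shorter sequence is inherited verbatim), and the single homomorphism $\phi_1\colon S_1\ra S_{j}$ composed with the embedding $S_{j}\hookrightarrow S'$ lets Theorem~\ref{thm:main} finish. You instead peel off the \emph{last two} factors, forming $T=S_{n-1}\star S_n$ via Theorem~\ref{thm:main} first (correctly noting $j=n$ is forced when $i=n-1$) and then re-verifying the hypothesis for $S_1,\ldots,S_{n-2},T$. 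This re-indexing step is valid --- the maps $S_{n-1},S_n\ra T$ you need are just the coproduct injections, which for semigroup free products are embeddings --- but it is the one place where your argument does more work than the paper's: you must adjust every $\phi_i$ whose target lands in $\{n-1,n\}$, whereas the paper's decomposition touches only $\phi_1$. One terminological nitpick: the injections $S_{j_i}\ra T$ are the structure maps \emph{into} the coproduct rather than something produced by its universal property (which governs maps \emph{out of} it); this does not affect correctness.
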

\begin{proof}
The base case $n=2$ is \autoref{thm:main} or \autoref{cor:autSmgrp}.  Suppose the statement is true for 
some $n\geq 2$.  Then $S:= S_2\star S_3\star \ldots \star S_n$ is (completely) self-similar,
and the homomorphism $\phi_1: S_1\ra S_j$ for some $j>1$ extends to a 
homomorphism $\phi: S_1\ra S$, since $S_j$ is a subsemigroup of $S$.
Thus $S_1\star S = S_1\star S_2\star \ldots \star S_n$ is (completely) self-similar by \autoref{thm:main},
and a (complete) automaton semigroup if $S_1,\ldots,S_n$ are by \autoref{cor:autSmgrp}.
Hence, by induction, the statement is true for all $n$.
\end{proof}

In particular, the existence of a single idempotent guarantees that a free product
of arbitrarily many self-similar semigroups is self-similar.

\begin{corollary}
Let $S$ be a free product of finitely many (completely) self-similar semigroups, one of 
which contains an idempotent. Then $S$ is (completely) self-similar.
If furthermore the base semigroups are (complete) automaton semigroups, then so is $S$.
\end{corollary}
\begin{proof}
Write $S = S_1\star S_2\star \ldots \star S_n$ with each $S_i$ a (completely) self-similar semigroup.
Since the free product operation is commutative, we may assume that $S_n$
contains an idempotent $e$.  
Then mapping all elements of $S_i$ to $e$ for $i<n$ gives the homomorphisms
$\phi_i$ in the hypothesis of \autoref{cor:ind}.
\end{proof}

The construction used to prove \autoref{thm:main} can also be used to obtain results which are not immediate corollaries of the theorem (or its corollary for automaton semigroups in \autoref{cor:autSmgrp}). As an example, we prove in the following theorem that it is possible to adjoin a free generator to every self-similar semigroup without losing the self-similarity property and that the analogous statement for automaton semigroups holds as well. The version for automaton semigroups does not follow directly from \autoref{cor:autSmgrp}, as the free monogenic semigroup is not a complete automaton semigroup \cite[Proposition~4.3]{cain_1auto} or even a (partial) automaton semigroup (see \cite[Theorem~18]{structurePart} or \cite[Theorem~1.2.1.4]{waechter2020automaton}).

\begin{theorem}\label{thm:freeGenerator}
  Let $S$ be a (completely) self-similar semigroup. Then $S \star t^+$ is (completely) self-similar. Furthermore, if $S$ is a (complete) automaton semigroup, then so is $S \star t^+$.
\end{theorem}
\begin{proof}
  Let $\A = (P, A, \sigma)$ be a generating automaton for $S$ where no $u \in P^+$ acts as the identity map (which is possible by \autoref{fct:nonTrivialAction}).
  Choose an arbitrary state $\hat{p} \in P$ and let $t \not\in P$ and $0, 1 \not\in A$ be a new state and new letters, respectively. We construct an automaton $\B = (Q, B, \delta)$ generating $S \star t^+$. Our construction is similar to the automaton constructed for \autoref{thm:main}. As the state set we use $Q = P \cup \{ t \}$ and as the alphabet we use
  \[
    B = \gset{ a, a^t, a^S, a^\circ }{ a \in A } \cup \{ 0, 1 \} \cup \{ \$, \hat{\$}, \ol{\$} \} \text{.}
  \]
  As in the proof of \autoref{thm:main}, we speak of $a$ as \emph{unmarked}, $a^t$ and $a^S$ as \emph{$t$-} and \emph{$S$-marked}, respectively, and of $a^\circ$ as \emph{circled}. Additionally, we also apply this notation to strings $\alpha \in A^*$ and refer to the letters $\$$, $\hat{\$}$ and $\ol{\$}$ again as \emph{open}, \emph{half-open} and \emph{closed gates}. The transitions $\delta$ are given by the following table (see also \autoref{fig:schematicFreeGenerator})\footnote{Compare also to the unary adding machine from \autoref{ex:unaryAddingMachine} – although we could also have used the binary adding machine from \autoref{ex:addingMachine}.} where $s\in P, a\in A$ and $s_0$ and $a_0$ are given by $(s, a)\sigma = (s_0, a_0)$, and the respective transitions do not exist (in $\B$) if $\sigma$ is undefined at $(s, a)$:
  \renewcommand{\arraystretch}{1.5}
  \begin{center}
    \begin{tabular}{r|ccccccccc}
          & $0$ & $1$ & $a$ & $a^t$ & $a^S$ & $a^\circ$ & $\$$ & $\hat{\$}$ & $\ol{\$}$ \\\hline
      $s$ & $(s, 0)$ & $(s, 1)$ & $(s, a)$ & $(s_0, a_0^S)$ & $(s_0, a_0^S)$ & $(s, a^\circ)$ & $(s, \$)$ & $(s, \$)$ & $(s, \ol{\$})$ \\
      $t$ & $(\hat{p}, 1)$ & $(t, 1)$ & $(t, a^t)$ & $(t, a^t)$ & $(t, a^\circ)$ & $(t, a^\circ)$ & $(t, \$)$ & $(\hat{p}, \hat{\$})$ & $(\hat{p}, \hat{\$})$
    \end{tabular}
  \end{center}
  Note that $\B$ is finite (complete) if $\A$ is.
  
  \begin{figure}\centering
    \begin{tikzpicture}[auto, shorten >=1pt, >=latex]
      \node[state] (s) {$t$};
      \node[state, right=4.5cm of s] (t) {$s$};
      \node[state, right=2cm of t, dashed] (t_0) {$s_0$};
      \node[state, below=2cm of t, dashed] (that) {$\hat{p}$};

      \draw[->] (s) edge[loop left] node[align=center] (1left)
                      {$1/1$} (s)
                    edge[loop below] node[align=center]
                      {$a^{\phantom{S}} / a^t$\\
                       $a^{\makebox[0pt][l]{$\scriptstyle t$}\phantom{S}} / a^t$\\
                       $a^{S} / a^\circ$\\
                       $a^\circ / a^\circ$} (s)
                    edge[loop above] node (1top) {$\$/\$$} (s)
                    edge node[swap, align=center] (1right) {$\ol{\$}/\hat{\$}$\\$\hat{\$}/\hat{\$}$}
                         node[align=center, pos=0.1] {$0/1$} (that)
      ;
      
      \draw[->] (t) edge[dotted] node[align=center]
                      {${a}^{t} /\, a_0^S$\\
                       ${a}^{S} /\, a_0^S$} (t_0)
                    edge[loop left] node[align=center] (2left)
                      {$0 / 0$\\$1/1$} (t)
                    edge[loop below] node[align=center]
                      {$a / a$\\
                       $a^\circ / a^\circ$} (t)
                    edge[loop above] node[align=center] (2top) {$\ol{\$}/\ol{\$}$\\$\hat{\$}/{\$}$\\$\$/\$$} (t)
      ;

      \path (2left.west) |- coordinate (2topleft) (2top.north);
      \path (that.south) -| coordinate (2bottomright) (t_0.east);
      \draw[rounded corners, dashed] ($(2topleft)+(-0.25cm,0.5cm)$) rectangle ($(2bottomright)+(0.25cm,-0.5cm)$);
      \path ($(2topleft)+(-0.25cm,0.5cm)$) -| node[below left] {$\A$} ($(2bottomright)+(0.25cm,-0.5cm)$);
      
      \path (1left.west) |- coordinate (1topleft) (2topleft);
      \path (1right.east) |- coordinate (1bottomright) (that.south);
      \draw[rounded corners, dashed] ($(1topleft)+(-0.25cm,0.5cm)$) rectangle ($(1bottomright)+(0.25cm,-0.5cm)$);
    \end{tikzpicture}
    \caption{Schematic representation of $\B$. The dotted transitions only exist if the corresponding transitions exist in $\A$.}\label{fig:schematicFreeGenerator}
  \end{figure}
  
  The rest of the proof is also quite similar to the one for \autoref{thm:main}. We show $\Sigma(\B) = S \star t^+$ by proving
  \[
    w =_{S \star t^+} \tilde{w} \iff \forall \beta \in B^*: \beta \cdot w = \beta \cdot \tilde{w} \text{ (or both undefined)}
  \]
  for all $w, \tilde{w} \in (P \cup \{ t \})^+$.
  
  We start with the direction from left to right (i.\,e.\ we show that $\B$ defines an action of $S \star t^+$ on $B^*$). We only need to show something for $w =_{\A} \tilde{w}$ with $w, \tilde{w} \in P^+$ since $\Sigma(\A) = S$ and all relations in $S \star t^+$ stem from relations in $S$ (as $t^+$ is free).
  
  Thus, let $w =_{\A} \tilde{w}$ for $w, \tilde{w} \in P^+$. We only show the implication
  \begin{center}
    \begin{tikzpicture}[>=latex, baseline=(m.base)]
      \matrix (m) [matrix of math nodes,
      text height=\ht\strutbox, text depth=0.25ex,] {
          & b  & \\
        w &    & w' \\
          & b' & \\
      };
      \drawArrows{2}{2}
      
      \matrix[right=of m] (mtilde) [matrix of math nodes,
      text height=\ht\strutbox, text depth=0.25ex,] {
                  & b  & \\
        \tilde{w} &    & \tilde{w}' \textcolor{gray}{{}=_{\A} w'} \\
                  & b' & \\
      };
      \drawArrows[mtilde]{2}{2}
      
      \path (m-2-3.base east) -- node[anchor=base] {$\implies$} (mtilde-2-1.base west);
    \end{tikzpicture}
  \end{center}
  and that $b \cdot w$ is undefined if and only if $b \cdot \tilde{w}$ is
  for all $b \in B$. The rest then follows by induction over the length of $\beta \in B^*$. As $0, 1, a, a^\circ, \$$ and $\ol{\$}$ are ignored\footnote{Remember that we say a symbol $a$ is ignored by a state $p$ if we have $(q, a)\delta = (q, a)$ (i.\,e.\ if there is an $a/a$-labeled self-loop at $q$).} by all states $s \in P$ in $\B$, we only have to show something for $a^t, s^S$ and $\hat{\$}$. Here, we have the cross diagrams
  \begin{center}
    \begin{tikzpicture}[>=latex]
      \matrix (m) [matrix of math nodes,
        text height=\ht\strutbox, text depth=0.25ex,] {
          & \hat{\$} & \\
        w &    & w \\
          & \$ & \\
      };
      \drawArrows{2}{2}
      
      \matrix[right=of m] (aST) [matrix of math nodes,
      text height=\ht\strutbox, text depth=0.25ex,] {
          & {a}^{{\scriptsize t}} \text{ or } {a}^{{\scriptsize S}} & \\
        w &    & w @ a \\
          & {(a \cdot w)}^S & \\
      };
      \drawArrows[aST]{2}{2}
    \end{tikzpicture}
  \end{center}
  (and the corresponding ones for $\tilde{w}$), where the left one settles the case $\hat{\$}$ and, in the right one, the letter at the top may either be $t$- or $S$-marked and $a \cdot w$ and $w @ a$ are as in $\A$ (if they are defined). These cases then follow since $w =_{\A} \tilde{w}$ implies $w@a =_{\A} \tilde{w}@a$ (or both undefined). If $a \cdot w$ and $a \cdot \tilde{w}$ are undefined in $\A$, we have that $a^t \cdot w$, $a^S \cdot w$, $a^t \cdot \tilde{w}$ and $a^S \cdot \tilde{w}$ are also all undefined in $\B$.
  
  For the direction from right to left (i.\,e.\ for showing that the action of $S \star t^+$ on $B^*$ is faithful), we again first claim that we have
  \begin{equation}\label{eqn:freeGeneratorClaim}
    \begin{tikzpicture}[>=latex, baseline=(m-2-3.base)]
    \matrix (m) [matrix of math nodes,
      text height=\ht\strutbox, text depth=0.25ex,] {
        & \ol{\$}^k & \\
      t^{n_1} u_1 \dots t^{n_k} u_k & & \hat{p}^{n_1} u_1 \dots \hat{p}^{n_k} u_k \textcolor{gray}{{}\in P^*} \\
        & \$^k & \\
    };
    \drawArrows{2}{2}
    \end{tikzpicture}
  \end{equation}
  for all $n_1, \dots, n_k > 0$ and $u_1, \dots, u_k \in P^+$. This can be seen from an induction very similar to the one in \autoref{fig:claimInductiveStep}.\footnote{\dots which is not very surprising as we have not changed how we handle the gates.}
  
  Next, we factorize
  \[
    w = u_0 \, t^{n_1} u_1 \dots t^{n_m} u_m \, t^{n_{m + 1}}
  \]
  with $u_0 \in P^*$, $n_1, \dots, n_m > 0$, $u_1, \dots, u_m \in P^+$ and $n_{m + 1} \geq 0$. An analogous factorization of $\tilde{w}$ yields $\tilde{m}$, $\tilde{u}_i$ (for $0 \leq i \leq \tilde{m}$) and $\tilde{n}_i$ (for $1 \leq i \leq \tilde{m} + 1$). Without loss of generality, we may assume $m \leq \tilde{m}$.
  
  First, suppose we have $m < \tilde{m}$ or, equivalently, $\tilde{m} = m + d$ for some $d > 0$. We have
  \begin{center}
    \begin{tikzpicture}[>=latex]
      \matrix (m) [matrix of math nodes,
        text height=\ht\strutbox, text depth=0.25ex,] {
          & \ol{\$}^{m} & & \ol\$^d \\
        u_0 & & u_0 & & u_0 \\
          & \ol{\$}^{m} & & \ol\$^d \\
        t^{n_1} u_1 \dots t^{n_m} u_{m} & & \hat{p}^{n_1} u_1 \dots \hat{p}^{n_m} u_{m} & & \hat{p}^{n_1} u_1 \dots \hat{p}^{n_m} u_{m}\\
          & \$^{m} & & \ol\$^d \\
        t^{n_{m + 1}} & & t^{n_{m + 1}} & & \hat{p}^{n_{m + 1}} \\
          & \$^{m} & & \hat{\$} \, \ol\$^{d - 1} \\
      };
      \drawArrows{2,4,6}{2,4}
      \draw[dashed, rounded corners] (m-3-2.north -| m-4-1.west) rectangle (m-5-2.south -| m-4-3.east);
      \path[fill=gray, opacity=0.2, rounded corners] (m-4-1.west |- m-6-1.north) rectangle (m-4-5.east |- m-7-4.south);
    \end{tikzpicture}
  \end{center}
  where we have used the claim (\ref{eqn:freeGeneratorClaim}) for the dashed part and the shaded part only exists for $n_{m + 1} > 0$. On the other hand, we have $\ol\$^{m + d} \cdot \tilde{w} = \$^{m + d} \cdot t^{n_{m + d + 1}} = \$^{m + d}$ by the claim (\ref{eqn:freeGeneratorClaim}), which is different to $\$^m \, \ol\$^d$ and to $\$^m \, \hat{\$} \, \ol\$^{d - 1}$. Thus, we assume $m = \tilde{m}$ from now on.
  
  The case $u_i \neq_{\A} \tilde{u}_i$ ($u_0$ or $\tilde{u}_0$ may be empty) for some $0 \leq i \leq m$ is similar to what we did in the proof of \autoref{thm:main}. Let $k$ be the minimum of these $i$ and, first, assume $k = 0$, i.\,e.\ we have $u_0 \neq_{\A} \tilde{u}_0$ (including the case that one is $\varepsilon$ while the other one is not). Then there is some $\alpha \in A^+$ (even if one is $\varepsilon$ by our assumption on $\A$) with $\alpha \cdot u_0 \neq \alpha \cdot \tilde{u}_0$ (including the case that exactly one side is undefined). As we are in the case $m = \tilde{m}$, we may assume without loss of generality that $\alpha \cdot u_0$ is defined (due to symmetry in $w$ and $\tilde{w}$), which yields
  \begin{center}
    \begin{tikzpicture}[>=latex]
      \matrix (m) [matrix of math nodes,
      text height=\ht\strutbox, text depth=0.25ex,] {
            & {\alpha}^S & \\
        u_0 & & u_0 @ \alpha \\
            & {(\alpha \cdot u_0)}^S & \\
        t^{n_1} u_1 \dots t^{n_m} u_m \, t^{n_{m + 1}} & & t^{n_1} u_1 \dots t^{n_m} u_m \, t^{n_{m + 1}} \\
            & {(\alpha \cdot u_0)}^{\circ} & \\
      };
      \drawArrows{2,4}{2}
      \path[fill=gray, opacity=0.2, rounded corners] (m-4-1.west |- m-4-1.north) rectangle (m-4-3.east |- m-5-2.south);
    \end{tikzpicture}
  \end{center}
  where the shaded part only exists for $t^{n_1} u_1 \dots t^{n_m} u_m \, t^{n_{m + 1}} \neq \varepsilon$. If $\alpha \cdot \tilde{u}_0$ is undefined in $\A$, we also have that $\alpha^S \cdot \tilde{w}$ is undefined (in $\B$). If $\alpha \cdot \tilde{u}_0$ is defined, we get (by a cross diagram analogous to the one above) ${\alpha}^S \cdot \tilde{w} \in \left\{ {(\beta \cdot \tilde{u}_0)}^S, {(\beta \cdot \tilde{u}_0)}^{\circ} \right\}$ and, thus, $\alpha^S \cdot w \neq \alpha^S \cdot \tilde{w}$ in all cases.
  
  For $k > 0$, there is again some $\alpha \in A^+$ with $\alpha \cdot u_k \neq \alpha \cdot \tilde{u}_k$ in $\A$ (including the case that one is undefined while the other one is not).\footnote{Note that in this case we can neither have $u_k = \varepsilon$ nor $\tilde{u}_k = \varepsilon$.} We may assume that $\alpha \cdot u_k$ is defined and obtain the cross diagram depicted in \autoref{fig:caseKgt0FreeGenerator}, where the gray part does not exist if $t^{n_{k + 1}} u_{k + 1} \dots t^{n_m} u_m \, t^{n_{m + 1}} = \varepsilon$. We also have an analogous diagram for $\tilde{w}$ if $\alpha \cdot \tilde{u}_k$ is defined and obtain
  \[
    \ol{\$}^{k - 1} \alpha \cdot w = \$^{k - 1} (\alpha \cdot u_k)^D \neq \$^{k - 1} (\alpha \cdot \tilde{u}_k)^{\tilde{D}} = \ol{\$}^{k - 1} \alpha \cdot w
  \]
  for some $D, \tilde{D} \in \{ S, \circ \}$. If $\alpha \cdot \tilde{u}_k$ is not defined in $\A$, then neither is $\ol\$^{k - 1} \alpha \cdot \tilde{w}$ in $\B$ and we are done as well.
  
  \begin{sidewaysfigure}\centering
    \begin{tikzpicture}[>=latex]
      \matrix (m) [matrix of math nodes,
      text height=\ht\strutbox, text depth=0.25ex,] {
          & \ol{\$}^{k - 1} & & \alpha & \\
        u_0 & & u_0 & & u_0 \\
          & \ol{\$}^{k - 1} & & \alpha & \\
        t^{n_1} u_1 \dots t^{n_{k - 1}} u_{k - 1} & & \hat{p}^{n_1} u_1 \dots \hat{p}^{n_{k - 1}} u_{k - 1} & & \hat{p}^{n_1} u_1 \dots \hat{p}^{n_{k - 1}} u_{k - 1} \\
          & \$^{k - 1} & & \alpha & \\
        t^{n_k} & & t^{n_k} & & t^{n_k} \\
          & \$^{k - 1} & & \alpha^t & \\
        u_k & & u_k & & u_k @ \alpha \\
          & \$^{k - 1} & & (\alpha \cdot u_k)^S & \\
        t^{n_{k + 1}} u_{k + 1} \dots t^{n_m} u_m \, t^{n_{m + 1}} & & t^{n_{k + 1}} u_{k + 1} \dots t^{n_m} u_m \, t^{n_{m + 1}} & & t^{n_{k + 1}} u_{k + 1} \dots t^{n_m} u_m \, t^{n_{m + 1}} \\
          & \$^{k - 1} & & (\alpha \cdot u_k)^\circ & \\
      };
      \drawArrows{2,4,6,8,10}{2,4}
      \draw[dashed, rounded corners] (m-3-2.north -| m-4-1.west) rectangle (m-5-2.south -| m-4-3.east);
      \path[fill=gray, opacity=0.2, rounded corners] (m-10-1.west |- m-10-1.north) rectangle (m-10-5.east |- m-11-4.south);
    \end{tikzpicture}
    \caption{Cross diagram to distinguish $w$ from $\tilde{w}$ if the first difference arises from $u_k$ and $\tilde{u}_k$. Note that $u_k @ \alpha$ and $\alpha \cdot u_k$ are the same as in $\A$ and the dashed part follows from the claim (\ref{eqn:freeGeneratorClaim}). The shaded part does not exist if the respective part of $w$ (or $\tilde{w}$) is empty. Compare to \autoref{fig:caseKgt0}.}\label{fig:caseKgt0FreeGenerator}
  \end{sidewaysfigure}
  
  It remains the case $n_i \neq \tilde{n}_i$ for some $1 \leq i \leq m + 1$. This time, choose $k$ \textbf{maximal} with this property, i.\,e.\ we have $N_{k + 1} = \sum_{i = k + 1}^{m + 1} n_i = \sum_{i = k + 1}^{m + 1} \tilde{n}_i$ but $n_k \neq \tilde{n}_k$. Without loss of generality, we may assume $n_k < \tilde{n}_k$ and, equivalently, $\tilde{n}_k = n_k + d$ for some $d > 0$. Then, we have $N_{k} = \sum_{i = k}^{m + 1} n_i < \sum_{i = k}^{m + 1} \tilde{n}_i = \tilde{N}_k$ or, more precisely, $\tilde{N}_k = N_k + d$.
  We obtain the cross diagram depicted in \autoref{fig:caseNk} and an analogous diagram for $\tilde{w}$ (compare to the action of the adding machine in \autoref{ex:unaryAddingMachine}). Thus, we have
  \[
    \ol{\$}^{k - 1} 0^{N_k + d} \cdot w = \$^{k - 1} 1^{N_k}0^d \neq \$^{k - 1} 1^{N_k + d} = \$^{k - 1} 1^{\tilde{N}_k} = \ol{\$}^{k - 1} 0^{N_k + d} \cdot \tilde{w} \text{.}\qedhere
  \]
  \begin{sidewaysfigure}\centering
    \begin{tikzpicture}[>=latex]
      \matrix (m) [matrix of math nodes,
      text height=\ht\strutbox, text depth=0.25ex,] {
          & \ol{\$}^{k - 1} & & 0^{N_k + d} & \\
        u_0 & & u_0 & & u_0 \\
          & \ol{\$}^{k - 1} & & 0^{N_k + d} & \\
        t^{n_1} u_1 \dots t^{n_{k - 1}} u_{k - 1} & & \hat{p}^{n_1} u_1 \dots \hat{p}^{n_{k - 1}} u_{k - 1} & & \hat{p}^{n_1} u_1 \dots \hat{p}^{n_{k - 1}} u_{k - 1}\\
          & \$^{k - 1} & & 0^{N_k + d} & \\
        t^{n_k} & & t^{n_k} & & \hat{p}^{n_k} \\
          & \$^{k - 1} & & 1^{n_k} 0^{N_k + d - n_k} & \\
        u_k & & u_k & & u_k \\
          & \$^{k - 1} & & 1^{n_k} 0^{N_k + d - n_k} & \\
        t^{n_{k + 1}} u_{k + 1} \dots t^{n_m} u_m \, t^{n_{m + 1}} & & t^{n_{k + 1}} u_{k + 1} \dots t^{n_m} u_m \, t^{n_{m + 1}} & & \hat{p}^{n_{k + 1}} u_{k + 1} \dots \hat{p}^{n_m} u_m \hat{p}^{n_{m + 1}} \\
          & \$^{k - 1} & & 1^{N_k}0^d & \\
      };
      \drawArrows{2,4,6,8,10}{2,4}
      \draw[dashed, rounded corners] (m-3-2.north -| m-4-1.west) rectangle (m-5-2.south -| m-4-3.east);
    \end{tikzpicture}
    \caption{Cross diagram to distinguish $w$ from $\tilde{w}$ if the first difference is at $n_k \neq \tilde{n}_k$ (compare to \autoref{ex:unaryAddingMachine}). The dashed part follows from the claim (\ref{eqn:freeGeneratorClaim}).}\label{fig:caseNk}
  \end{sidewaysfigure}
\end{proof}

Having \autoref{thm:freeGenerator} to handle the special case of the free semigroup of rank one allows us to make more natural statements. For example, we may now state:
\begin{corollary}\label{cor:finiteOrFree}
  Let $S$ be a (completely) self-similar semigroup and let $T$ be a finite or free semigroup. Then $S \star T$ is (completely) self-similar. If furthermore $S$ is a (complete) automaton semigroup, then so is $S \star T$.
\end{corollary}
\begin{proof}
  If $T$ is finite, it is a complete automaton semigroup \cite[Proposition~4.6]{cain_1auto} (and, thus, in particular completely self-similar). In addition, it must contain an idempotent and we obtain the statement by \autoref{cor:idempotent}.
  
  If $T$ is free of rank at least two, it is also a complete automaton semigroup (see \autoref{ex:freeSemigroup} and \cite[Proposition~4.1]{cain_1auto}) and admits a length function. The statement for this case then follows from \autoref{cor:lengthFunction}.
  
  Finally, if $T$ is free of rank one, we exactly have the statement of \autoref{thm:freeGenerator}.
\end{proof}

\section{Limits of the Construction}\label{sct:limits}

By Corollaries~\ref{cor:idempotent} and~\ref{cor:lengthFunction}, we have to look into idempotent-free automaton semigroups without length functions in order to find a pair of self-similar (or automaton) semigroups not satisfying the hypothesis of \autoref{thm:main} (or \autoref{cor:autSmgrp}), which would be required in order to either relax the hypothesis even further (possibly with a new construction), or provide a pair $S, T$ of self-similar semigroups such that $S \star T$ is not self-similar.  It turns out that we can reduce the class of potential candidates even further: we will show next that no finitely generated simple or $0$-simple idempotent-free semigroup is self-similar (and, thus, that no simple or $0$-simple idempotent-free semigroup is an automaton semigroup).

In the following, we write $S^1$ for the monoid arising from a semigroup $S$ by adjoining an identity element if $S$ does not already contain one (if $S$ is a monoid, we have $S^1 = S$).
For two subsets $X, Y$ of a semigroup, we write $XY = \{ xy \mid x \in X, y \in Y \}$ and observe that this inherits the associativity of the product in the semigroup.

\paragraph*{\textbf{Simple and $0$-Simple Idempotent-Free Semigroups.}}
A (two-sided) \emph{ideal} of a semigroup $S$ is a subset $I \subseteq S$ with $S^1 I S^1 \subseteq I$.
A semigroup $S$ is \emph{simple} if $\emptyset$ and $S$ are its only ideals. A \emph{zero} of a semigroup $S$ is an element $0 \in S$ with $s0 = 0s = 0$ for all $s \in S$. If a semigroup $S$ contains a zero, it is unique and we denote it by $0$. Of course, $\{ 0 \}$ is always an ideal if a zero exists. A semigroup $S$ with zero is called \emph{$0$-simple} if $\emptyset, \{ 0 \}$ and $S$ are its only ideals and we have $S^2 \neq \{ 0 \}$, i.\,e.\ $S$ is not a null semigroup 
(the latter condition is a technical requirement, see \cite[p.~66]{HOWIE} for further details).

A semigroup with zero is called \emph{idempotent-free} if $0$ is its only idempotent and a semigroup without zero is called \emph{idempotent-free} if it does not contain an idempotent.

A special role in the theory of simple and $0$-simple idempotent-free semigroups is played by the semigroups $A = \langle a, b \mid a^2 b = a \rangle$ and $C = \langle a, b \mid a^2 b = a, a b^2 = b \rangle$ (see \cite{jones_bicyclic} for a further discussion):
\begin{proposition}[follows from {\cite[Corollary~5.2]{jones_bicyclic}}]\label{prop:ACembedding}
  Let $S$ be a finitely generated simple or $0$-simple idempotent-free semigroup. Then, there is an injective homomorphism $A \to S$ or an injective homomorphism $C \to S$.
\end{proposition}
In particular, $A$ and $C$ are both idempotent-free.

If there is an injective homomorphism $S \to T$ between two semigroups $S$ and $T$, then $S$ is isomorphic to a subsemigroup of $T$ and we also say that $S$ \emph{embeds into} $T$ (or that $S$ can be \emph{embedded into} $T$). Thus, \autoref{prop:ACembedding} states that, into every finitely generated simple or $0$-simple idempotent-free semigroup, we can embed $A$ or $C$.

\paragraph*{\textbf{Residually Finite Semigroups.}}
For our proof, we will show that no simple or $0$-simple idempotent-free semigroup is residually finite. A semigroup $S$ is called \emph{residually finite}, if, for all $s, t \in S$ with $s \neq t$, there is a homomorphism $\varphi: S \to F$ from $S$ to some finite semigroup $F$ with $s\varphi \neq t\varphi$. Clearly, every subsemigroup of a residually finite semigroup is also residually finite:
\begin{fact}\label{fct:residuallyFiniteSubsemigroup}
  If a semigroup $S$ is not residually finite and embeds into a semigroup $T$, then $T$ cannot be residually finite either.
\end{fact}

Every complete automaton semigroup is residually finite \cite[Proposition~3.2]{cain_1auto} and the argument can easily be extended to general self-similar semigroups.
\begin{fact}\label{fct:selfSimilarIsResiduallyFinite}
  Every self-similar semigroup $S$ is residually finite.
\end{fact}
\begin{proof}
  Let $S$ be generated by $\A = (Q, A, \delta)$ and let $u, v \in Q^+$ with $u\neq_{\A} v$. By definition, there is some string $\alpha \in A^+$ of length $n$ with $\alpha \cdot u \neq \alpha \cdot v$ (including the case that one is defined while the other one is not). By restricting the action of $Q^+$ on $A^+$ to an action of $Q^+$ on $A^n$ (which is possible because the action of $\A$ is 
length-preserving), we obtain a finite quotient $F$ of $S$ where $u$ and $v$ are different elements (as they act differently on $\alpha \in A^n$).
\end{proof}

We will show that neither $A$ nor $C$ is residually finite and finally combine all the pieces.
The arguments are the same as in \cite[Example~4.7]{lallement1974monoids} but we include a proof for completeness.
\begin{proposition}\label{prop:ACnotResiduallyFinite}
  Neither $A = \langle a, b \mid a^2 b = a \rangle$ nor $C = \langle a, b \mid a^2 b = a, a b^2 = b \rangle$ is residually finite.
\end{proposition}
\begin{proof}
  The proof is the same for $A$ and $C$. Therefore, let $X = A$ or $X = C$. Since $X$ is idempotent-free, we have $ab \neq (ab)^2$ in $X$. However, we show that any homomorphism $\varphi: X \to F$ to a finite semigroup $F$ maps $ab$ and $(ab)^2$ to the same element.

  Fix such a homomorphism and, for simplicity, write $a$ for $a\varphi$ and $b$ for $b\varphi$.
  All further calculations are in $F$. Note that the relation $a\, ab = a$ holds here as well.
  
  By the pigeon-hole principle, there is some minimal $m \geq 1$ and some minimal $n > m$ with $a^m = a^n$ (since $F$ is finite).
  For $m > 1$ (and, thus, $n > 2$), we have
  \begin{align*}
    a^{m - 1} &= a^{m - 2} \, a = a^{m - 2} \, a^2 b = a^m b = a^n b = a^{n - 2} \, a^2 b = a^{n - 2} \, a = a^{n - 1} \text{,}
  \intertext{which contradicts the minimality of $m$ and $n$. For the remaining case $m = 1$ and $n > 1$ (i.\,e.\ $a = a^n$), we have}
    a^{n - 1} &= a^{n - 2} \, a = a^{n - 2} \, a^2 b = a^{n} b = ab
  \quad\text{and, thus,}\quad
    a = a^n = aba \text{,}
  \end{align*}
  which implies $(ab)^2 = ab$ as desired.
\end{proof}

\begin{theorem}\label{thm:simpleIdempotentFreeIsNotResiduallyFinite}
  Let $S$ be a finitely generated simple or $0$-simple idempotent-free semigroup. Then $S$ is not residually finite.
\end{theorem}
\begin{proof}
  By \autoref{prop:ACembedding}, $A$ or $C$ embeds into $S$. Since neither of the two is residually finite (by \autoref{prop:ACnotResiduallyFinite}), we obtain that $S$ cannot be residually finite either by \autoref{fct:residuallyFiniteSubsemigroup}.
\end{proof}

Since self-similar semigroups are residually finite (\autoref{fct:selfSimilarIsResiduallyFinite}),
we have:

\begin{corollary}\label{cor:simpleIdempotentIsNotAutomaton}
  A finitely generated simple or $0$-simple idempotent-free semigroup is not self-similar. In particular, a simple or $0$-simple idempotent-free semigroup is not an automaton semigroup.
\end{corollary}

\paragraph*{\textbf{A Pair of Semigroups Without Homomorphisms.}\protect\footnote{The authors would like to thank Emanuele Rodaro for his help in finding this example.}}
We conclude this section by presenting a pair $S, T$ of semigroups without a homomorphism\ $S \to T$ or $T \to S$ where $S$ and $T$ possess typical properties of automaton semigroups, which makes them good candidates for also belonging to this class (and therefore interesting in the light of \autoref{thm:main} and \autoref{cor:autSmgrp}): $S$ and $T$ will be
\begin{itemize}
  \item finitely generated (automaton semigroups are generated by the finitely many state of the generating automaton),
  \item residually finite (like all self-similar semigroups; see \autoref{fct:selfSimilarIsResiduallyFinite}) and 
  \item with word problem solvable in linear time.
\end{itemize}
The word problem of a semigroup finitely generated by some set $Q$ is the decision problem whether two input words over $Q$ represent the same semigroup element. The word problem of any automaton semigroup can be solved in polynomial space \cite{dangeli2017complexity} and, under common complexity theoretic assumptions, this cannot significantly be improved as there is an automaton group whose word problem is hard for this complexity class \cite{waechter2023automaton}.

Despite having all these properties, it is not clear whether either of the semigroups $S$ and $T$ is an automaton semigroup (or at least self-similar).

We choose $S = \langle a, b \mid ba = b \rangle$. Clearly, $S$ is finitely generated and
it is easy to see that every element of $S$ can be written in normal form as $a^i b^k$ or $b^i$ with $i > 0$ and $k \geq 0$.
Since the normal form can be computed in linear time (in fact, it may even be computed by a rational transducer), we immediately obtain that the word problem of $S$ can also be solved in linear time.
The product of two elements in normal form (with $i + k > 0$ and $j + \ell > 0$) is given by:
\begin{equation*}
  a^i b^k \, a^j b^\ell = \begin{cases}
    a^{i} b^{k + \ell} & \text{if } k > 0 \text{,}\\
    a^{i + j} b^\ell & \text{otherwise} \text{.}
  \end{cases}
\end{equation*}
Thus, for $i = j$ and $k = \ell$, the normal form of the product will either have at least one additional $a$ or on additional $b$, which shows:
\begin{fact}\label{fct:SidempotentFree}
  $S = \langle a, b \mid ba = b \rangle$ is idempotent-free.
\end{fact}

Similarly, we may also show that $S$ is right cancellative (i.\,e.\ that $st = s't$ implies $s = s'$ for all $s, s', t \in S$).
\begin{fact}\label{fct:SrightCancellative}
  $S = \langle a, b \mid ba = b \rangle$ is right cancellative.
\end{fact}
\begin{proof}
  We may consider the elements in normal form and assume $a^i b^k \, a^{j} b^\ell = a^{i'} b^{k'} \, a^j b^\ell$ (for $i' + k' > 0$, $i + k > 0$ and $j + \ell > 0$).
  
  Suppose we have $k = 0$ but $k' > 0$. On the left-hand side, we get $a^{i + j}b^\ell$ and, on the right-hand side, we get $a^{i'} b^{k' + \ell}$. Since they must be equal, we obtain $\ell = k' + \ell$ and, thus, $k' = 0$; a contradiction. The situation $k > 0$ but $k' = 0$ is symmetrical.
  
  Thus, for $k = k' = 0$, we need to have $a^{i + j}b^\ell = a^{i' + j}b^\ell$, which implies $i = i'$, and are done. For $k, k' > 0$, we have $a^{i} b^{k + \ell} = a^{i'} b^{k' + \ell}$ and, thus, $i = i'$ and $k + \ell = k' + \ell$. The latter implies $k = k'$.
\end{proof}

With respect to the above properties, we point out that $S$ is residually finite by \cite[Example~4.6]{lallement1974monoids}.\footnote{Strictly, speaking this shows that $S^\textnormal{op}$ is residually finite; see below.}
For the sake of completeness, we give an alternative proof here.
\begin{proposition}\label{prop:SnotResiduallyFinite}
  $S = \langle a, b \mid ba = b \rangle$ is residually finite.
\end{proposition}
\begin{proof}
  Let $s, t \in \{ a^i b^k, b^i \mid i > 0, k \geq 0 \}$ be elements of $S$ in normal form with $s \neq t$. We need some finite semigroup $F$ and a homomorphism $\varphi: S \to F$ with $s\varphi \neq t\varphi$.
Let $m$ be the number of $b$s in $s$ and $n$ the number of $b$s in $t$. 

If $m\neq n$, assume $m < n$ without loss of generality, and let $F$ be the cyclic group of order $n + 1$, generated by $x$, with identity $1$.  Observe that $a \mapsto 1$, $b \mapsto x$ induces a well-defined 
homomorphism $\varphi: S \to F$, and that $s \varphi = x^m \neq x^n = t \varphi$.

If $m=n$, then $s$ and $t$ cannot both be of the form $b^i$. Without loss of generality, 
write $s = a^i b^m$ and $t = a^j b^m$ for $0 \leq i < j$. 
Let $F$ be the transition semigroup\footnote{In a deterministic and complete finite automaton, every letter $a$ induces a transition function, which maps a state $q$ to the (unique and existing) state reached from $q$ by reading an $a$. The closure of these functions under composition forms the \emph{transition semigroup} of the automaton (see \cite{lawson2004finite} for details).} of the automaton (without output) depicted in \autoref{fig:transitionSemigroup}. Its state set is the union of $\{ 0, \dots, i, \dots, j \}$ and its disjoint copy $\{ 0', \dots, i', \dots, j' \}$. The homomorphism $\varphi: S \to F$ is given by mapping $a$ to the transition function belonging to $a$ in the automaton and mapping $b$ to the transition function belonging to $b$, respectively. The reader may observe that reading $ba$ in the automaton always ends in the same state as reading $b$. Thus, the mapping indeed extends into a well-defined homomorphism of semigroups. When we start in state $0$ and read $a^i b^m$, we can either end in state $i$ or in state $i'$. Similarly, we end either in $j$ or in $j'$ if we read $a^j b^m$. As we have $i \neq j$, this shows $s \varphi \neq t \varphi$.
\end{proof}
\begin{figure}\centering
  \begin{tikzpicture}[auto, shorten >=1pt, >=latex]
    \node[state] (0) {$0$};
    \node[state, below=of 0] (0') {$0'$};

    \node[state, right=of 0] (1) {$1$};
    \node[state, below=of 1] (1') {$1'$};
    
    \node[right=of 1] (dotsl) {$\dots$};
    \node[right=of 1'] (dotsl') {$\dots$};
    
    \node[state, right=of dotsl] (i) {$i$};
    \node[state, below=of i] (i') {$i'$};
    
    \node[right=of i] (dotsr) {$\dots$};
    \node[right=of i'] (dotsr') {$\dots$};
    
    \node[state, right=of dotsr] (j) {$j$};
    \node[state, below=of j] (j') {$j'$};
    
    \path[->] (0) edge node {$a$} (1)
                  edge node {$b$} (0')
              (1) edge node {$a$} (dotsl)
                  edge node {$b$} (1')
              (dotsl) edge node {$a$} (i)
              (i) edge node {$a$} (dotsr)
                  edge node {$b$} (i')
              (dotsr) edge node {$a$} (j)
              (j) edge[loop right] node {$a$} (j)
                  edge node {$b$} (j')
              (0') edge[loop below] node {$a, b$} (0')
              (1') edge[loop below] node {$a, b$} (1')
              (i') edge[loop below] node {$a, b$} (i')
              (j') edge[loop below] node {$a, b$} (j')
    ;
  \end{tikzpicture}
  \caption{Automaton whose transition semigroup $F$ is used to distinguish $a^i b^m$ and $a^j b^m$.}\label{fig:transitionSemigroup}
\end{figure}

\paragraph*{\textbf{The Opposite Semigroup.}}
The \emph{opposite semigroup} $S^\textnormal{op}$ of an arbitrary semigroup $S$ with operation $s \cdot t$ is given by the same elements as in $S$ but with the operation $s * t = t \cdot s$. A simple calculation shows that a homomorphism $\varphi: S \to T$ naturally induces a homomorphism $S^\textnormal{op} \to T^\textnormal{op}$. In particular, we obtain:
\begin{fact}\label{fct:opResiduallyFinite}
  A semigroup $S$ is residually finite if and only if $S^\textnormal{op}$ is residually finite.
\end{fact}

For our pair of finitely generated and residually finite semigroups without homomorphisms $S \to T$ and $T \to S$, we choose the previously discussed semigroup $S = \langle a, b \mid ba = b \rangle$ and its opposite 
$T = S^\textnormal{op} = \langle a, b\mid ab = b\rangle$. Due to \autoref{prop:SnotResiduallyFinite} and \autoref{fct:opResiduallyFinite}, it only remains to show that there are no homomorphisms in either direction 
between $S$ and $T$.
\begin{proposition}\label{prop:SHasNoHoms}
  There is no homomorphism between $S = \langle a, b \mid ba = b \rangle$ and its opposite $S^\textnormal{op}$.
\end{proposition}
\begin{proof}
  We show the statement by contradiction.
  
  If there is a homomorphism $S \to S^\textnormal{op}$, there is also a homomorphism $S^\textnormal{op} \to (S^\textnormal{op})^\textnormal{op} = S$. Thus, consider a homomorphism $\varphi: S^\textnormal{op} \to S$.
  
  In $S^\textnormal{op}$, we have $a * b = ba = b$ and, thus,
  \[
    b\varphi = (a * b)\varphi = (a\varphi)(b\varphi)
  \]
  where we have used that $\varphi$ is a homomorphism. Multiplying this equation on the left by $a \varphi$, we obtain $(a \varphi)(b\varphi) = (a\varphi)^2 (b\varphi)$. Since $S$ is right cancellative (by \autoref{fct:SrightCancellative}), we obtain $a \varphi = (a \varphi)^2$. However, this is impossible since $S$ is idempotent-free (by \autoref{fct:SidempotentFree}).
\end{proof}

\begin{question}\label{q:SSelfSimilar}
  Is the semigroup $S$ self-similar? Is it an automaton semigroup? What about $S^\textnormal{op}$?
\end{question}

Note that it is not known whether the class of automaton semigroups is closed under taking the opposite semigroup \cite[Question~13]{bc_automaton2}.
In defining automaton semigroups, we make a choice as to whether states act on strings on the right (as in this paper) or the left,
and it is not clear whether this choice actually makes a difference to the class of semigroups defined.

\section{Future work}
The existence of a homomorphism from one base semigroup to another appears
fundamental to the construction idea used in all results about free products of 
automaton semigroups (or self-similar semigroups) to date.  If the hypothesis of Theorem~\ref{thm:main} is
not in fact necessary, it is likely that either the new examples will be in some sense
`artificial' or that a novel construction idea will be required.

The undecidability of the finiteness problem \cite{gillibert_finiteness} and freeness problem
\cite{freenessProblem} for automaton semigroups lead us to conjecture that 
the `automaton-semigroupness' problem for free products of automaton semigroups may be undecidable
-- unless it turns out to be trivial --
considering the statement of \autoref{cor:finiteOrFree} that $S\star T$ is an automaton semigroup
if $S$ is any automaton semigroup and $T$ is finite or free (in particular of rank at least two).

 \begin{conjecture}
 One of the following holds:
 \begin{itemize}
 \item Every free product of two (and hence of finitely many) automaton semigroups is itself an automaton semigroup; or
\item It is undecidable, given two automata $\A$ and $\B$, whether the free 
product $\Sigma(\A)\star \Sigma(\B)$ is an automaton semigroup.
\end{itemize}
\end{conjecture}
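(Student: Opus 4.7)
The conjecture is a dichotomy and rather than a single theorem it invites two quite different programs of work. The plan is to aim for the second horn by a reduction strategy, since the first horn (``every free product of two automaton semigroups is an automaton semigroup'') would require a genuinely new construction going beyond the domino-plus-gates template of Theorem~\ref{thm:main} and its predecessors, where the homomorphism is used in an essential way to allow the $S$-factor to transmit information across gates into the $T$-region.

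The reduction I would attempt leverages Gillibert's undecidability of the finiteness problem \cite{gillibert_finiteness}: given an input automaton $\mathcal{C}$, produce a pair of automata $\A_{\mathcal{C}}, \B_{\mathcal{C}}$ such that whether $\Sigma(\A_{\mathcal{C}})\star\Sigma(\B_{\mathcal{C}})$ is an automaton semigroup encodes whether $\Sigma(\mathcal{C})$ is finite. The natural shape of such a reduction is to adjoin $\mathcal{C}$ as a parasite to a fixed ``pathological'' pair $(\A_0,\B_0)$ whose free product is provably not an automaton semigroup, arranged so that finiteness of $\Sigma(\mathcal{C})$ forces an idempotent (or a homomorphism in one direction) to appear and Corollary~\ref{cor:ind} to apply, while infiniteness of $\Sigma(\mathcal{C})$ preserves the obstruction from $(\A_0,\B_0)$.

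A prerequisite step is therefore to exhibit a single concrete non-example $(\A_0,\B_0)$. I would look among automaton semigroups that are neither homogeneous, contain no idempotent, and admit no homomorphism in either direction: small free semigroups of rank at least two, paired with semigroups arising from self-similar groups without torsion, are natural candidates. The most tractable obstructions would be growth-theoretic or combinatorial: for instance, showing that the orbit of a fixed word under $S\star T$ has a growth function incompatible with the synchronous action of any finite automaton.

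The main obstacle is precisely this first step. No pair of automaton semigroups whose free product is provably not an automaton semigroup is currently known, and each successive paper (culminating in the present one) has only widened the class of free products that \emph{are} known to be automaton semigroups. If no such non-example exists, the reduction strategy is vacuous and only the first horn can hold, but proving it would require abandoning the domino-and-gate paradigm entirely. My realistic expectation is that settling the conjecture will require either a novel construction that dispatches the first horn outright or a substantially new encoding of computation into the automaton-semigroup-ness question, and that neither is within easy reach of the techniques developed here.
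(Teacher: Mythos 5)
This statement is a \emph{conjecture}: the paper offers no proof of it, only a brief motivating remark (the undecidability of the finiteness problem for automaton semigroups \cite{gillibert_finiteness} together with the fact that free products of finite automaton semigroups are automaton semigroups). So there is nothing in the paper to compare your argument against, and your text --- candidly --- is not a proof either; it is a research programme. You should be clear that what you have written establishes neither horn of the dichotomy.

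That said, your discussion is sound and closely mirrors the author's own reasoning. Your proposed reduction from the finiteness problem via Gillibert's theorem is exactly the kind of evidence the paper cites when stating the conjecture, and your identification of the central obstacle is correct and worth emphasising: no pair of automaton semigroups whose free product is provably \emph{not} an automaton semigroup is known, and without such a non-example (or at least a usable necessary condition for being an automaton semigroup) no undecidability reduction can get off the ground. Your observation that the first horn would require abandoning the homomorphism-based domino-and-gate paradigm also matches the paper's closing remarks in the Future Work section. One caution on your sketch of the reduction: ``adjoining $\mathcal{C}$ as a parasite'' so that finiteness of $\Sigma(\mathcal{C})$ creates an idempotent or homomorphism, while infiniteness preserves an obstruction, is plausible in outline but would need care --- adjoining generators can change the semigroup in ways that destroy the obstruction even in the infinite case, and the paper's own results show how fragile such obstructions are (a single idempotent anywhere in the free product already suffices for the positive direction by Corollary~\ref{cor:ind}). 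Treat this as a statement of an open problem, not as progress toward its solution.
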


\section*{Acknowledgements}
The first author was supported by the Funda\c{c}\~{a}o para a Ci\^{e}ncia e a Tecnologia (Portuguese Foundation for Science and Technology) through an {\sc FCT} post-doctoral fellowship ({\sc SFRH}/{\sc BPD}/121469/2016) and the projects {\sc UID}/{\sc MAT}/00297/2013 (Centro de Matem\'{a}tica e Aplica\c{c}\~{o}es) and {\sc PTDC}/{\sc MAT-PUR}/31174/2017.

During the research and writing for this paper, the second author was previously affiliated with Institut für Formale Methoden der Informatik (FMI) at Universität Stuttgart, Centro de Matemática da Universidade do Porto (CMUP), which is financed by national funds through FCT – Fundação para a Ciência e Tecnologia, I.P., under the project with reference UIDB/00144/2020, the Dipartimento di Matematica of the Politecnico di Milano where he was funded by the Deutsche Forschungsgemeinschaft (DFG, German Research Foundation) – 492814705 and the Fachrichtung Mathematik of the Universität des Saarlandes, partly funded by ERC grant 101097307.
The listed affiliation is his current one where this work was supported by the Engineering and Physical Sciences Research Council [grant number EP/Y008626/1].

The authors would like to thank the anonymous referee for pointing out simplifications of the arguments in \autoref{sct:limits}.

\bibliographystyle{plainurl}
\bibliography{references}

\end{document}